\documentclass[a4paper, 12pt, oneside]{amsart} 

\usepackage[paper=a4paper,left=25mm,right=25mm,top=30mm,bottom=30mm]{geometry}
\usepackage{amssymb}
\usepackage[english]{babel}
\usepackage{graphicx}
\usepackage{amsmath}
\usepackage{amsthm}
\usepackage{bbm}
\usepackage{xcolor} 
\usepackage[textwidth=3.5cm]{todonotes}
\usepackage{color}
\definecolor{Myred}{rgb}{0.85,0,0.3}

\newtheorem{theorem}{Theorem}[section]
\newtheorem{proposition}[theorem]{Proposition}
\newtheorem{lemma}[theorem]{Lemma}
\newtheorem{corollary}[theorem]{Corollary}
\theoremstyle{definition}

\newtheorem{example}[theorem]{Example}
\newtheorem{remark}[theorem]{Remark}

\theoremstyle{remark}

\newcommand{\p}{\mathbbm{P} }  
\newcommand{\e}{\mathbbm{E} }

\newcommand{\PP}{\mathbb{P}}
\newcommand{\IndFun}{\textbf{1}_}

\usepackage{hyperref} 
\hypersetup{
pdfpagemode=UseNone,
pdfstartview=FitH,
pdfdisplaydoctitle=true,
pdftitle={On recurrence of the  multidimensional Lindley process},
pdfauthor={Wojciech Cygan, Judith Kloas},
pdflang=en-US
}

\title[On recurrence of the multidimensional Lindley process]{On recurrence of the multidimensional\\ Lindley process}

\author{Wojciech Cygan}
\thanks{The research supported by: Austrian Science Fund projects FWF P24028 and W1230, NAWI Graz, and National Science Centre (Poland): grant 2015/17/B/ST1/00062.}

\author{Judith Kloas}

\subjclass[2010]{60G50,  
60K25,					 
60G52}					 

\address{							
		Wojciech Cygan\\
		Instytut Matematyczny\\
		Uniwersytet Wroc{\l}awski\newline
		Pl. Grunwaldzki 2/4\\
		50-384 Wroc{\l}aw, Poland \newline
		\& Institut f\"{u}r Diskrete Mathematik\\
 		Technische Universit\"{a}t Graz\newline
 		Steyrergasse 30\\
 		8010 Graz, Austria}
\email{wojciech.cygan@uwr.edu.pl}

\address{
		Judith Kloas\\
		Institut f\"{u}r Diskrete Mathematik\\
 		Technische Universit\"{a}t Graz\newline
 		Steyrergasse 30\\
 		8010 Graz, Austria}
\email{kloas@math.tugraz.at}

\keywords{ladder epoch, Lindley process, local contractivity, random walk, stable process}

\begin{document}

\begin{abstract}
A Lindley process arises from classical studies in queueing theory and it usually reflects waiting times of customers in  single server models. In this note we study recurrence of its higher dimensional counterpart under some mild assumptions on the tail behaviour of the underlying random walk. There are several links between the Lindley process and the associated random walk and we build upon such relations. We apply a method related to discrete subordination for random walks on the integer lattice together with various facts from the theory of fluctuations of random walks. 
\end{abstract}

\maketitle

\section{Introduction}
Let $(Y_n)_{n\geq1}$ be a sequence of independent and identically distributed random variables with common distribution $\mu$.
A Lindley process (LP) is a discrete time stochastic process $(W_n)_{n\geq 0}$ defined recursively by
\begin{align}\label{UniVarDef}
W_0=w_0\geq 0\quad \text{and} \quad W_{n}=\max\{ W_{n-1}-Y_{n}, 0\} , \quad \text{for }n\geq 1. 
\end{align}
The random variable $W_n$ may be interpreted as the waiting time of the $n$-th client in a single server queue, where customers arrive randomly and are served within a random amount of time. 
More precisely, if we let $U_n$ to be the service time of the $n$-th client and $T_n$ to be the time between the arrival of the $(n-1)$-th client and the $n$-th client, then the relation between $W_{n+1}$ and $W_n$ is exactly $W_{n+1}= \max\{W_{n}+U_n-T_n,0\}$. Hence \eqref{UniVarDef} holds with $Y_n = T_n-U_n$ which must be i.i.d. We mention that LP comes up in many different places in the queuing theory, see \textsc{Asmussen} \cite{as00} for some examples.

The process $(W_n)$ may be also regarded as a Markov chain on the state space $[0,\infty )$ with one-step transition probabilities given by $ p (w_0, [0,w]) = \PP (W_1 \leq w \, |\, W_0=w_0) = \mu \left([w_0-w,\infty )\right)$, for $ w\geq 0$. 
Let $S_n=Y_1+\cdots +Y_n$ be the associated random walk.  
Relation \eqref{UniVarDef} reveals that the LP which starts at $0$ obeys the same transition rules as the random walk $(S_n)$, except the times when $(S_n)$ crosses its successive maximal levels, since at these moments $(W_n)$ stays at $0$. 
In other words, the return times to $0$, denoted by $T_W(k)$, $k\geq 0$, for the process $(W_n)$ started at $0$ coincide with the ascending ladder epochs of the random walk $(S_n)$. Let us recall that the (non-strict) ascending ladder epochs
are defined as
\begin{align*}
\bar{\tau}(0)&=0, \quad \bar{\tau}(k+1)=\inf\{n>\bar{\tau}(k):S_n\geq S_{\bar{\tau}(k)}\},\quad \text{for } k\geq 0,
\end{align*}
where $S_0=0$ and we use the convention that $\inf \emptyset =\infty$. It is straightforward to check that $T_W(k) = \bar{\tau} (k)$.
There are also more connections like this and one of the most significant is that, given $W_0=0$, the random variable $W_n$ has the same distribution as $M_n = \max\{0, S_1 ,\ldots , S_n\}$. 
All the mentioned facts bear a lot of fruitful consequences and we exploit them repeatedly in our paper.
The main aim of the article is to present sufficient criteria for recurrence of the multidimensional counterpart of the LP.

We briefly state the well-known facts about recurrence of the LP in the one-di\-mensional case.  
Recall that an essential class for a Markov chain is a subset of the state space which is irreducible and absorbing. 
Given $\PP (Y_1>0)>0$ there is only one essential class for $(W_n)$ and it contains all the states that can be visited after the process reached $0$. Thus to study its recurrence it suffices to concentrate on the behaviour at the origin.

We recall from \textsc{Feller} \cite[Ch. XII, Sec. 2, Theorem~1]{fe71II} that there are three types of random walks:  
 $(S_n)$ is either oscillating, then $\liminf_{n\rightarrow \infty} S_n=-\infty$ and $ \limsup_{n\rightarrow \infty} S_n=\infty$;
or it has a positive drift with $\lim_{n\rightarrow \infty} S_n =\infty$; or
it has a negative drift meaning that $\lim_{n\rightarrow \infty} S_n =-\infty$.
In the first two cases we have $\PP(\bar{\tau }(1) <\infty)=1$, whereas in the negative drift case $\p(\bar{\tau }(1)<\infty)<1$.
By the correspondence between the ladder epochs of $(S_n)$ and the return times of $(W_n)$, we conclude that $(W_n)$ is recurrent if and only if $\p(\bar{\tau}(1)<\infty)=1$.
Therefore $(W_n)$ is recurrent if and only if $(S_n)$ is oscillating or if it has a positive drift and the following dichotomy holds true: 

\vspace*{0,1cm}
\noindent 1) The process $(W_n)$ is null recurrent
if and only if $(S_n)$ is oscillating. Then $\bar{\tau} =\bar{\tau} (1)$ has infinite first moment, cf. \textsc{Gut} \cite[Theorem 9.1]{gu88}. It happens if $\e Y_1=0$ or if $\mu$ is symmetric.

\vspace*{0,1cm}
\noindent 2) The process $(W_n)$ is positive recurrent if and only if $(S_n)$ has a positive drift. In this case $\e \bar{\tau} $ is finite and $W_n$ converges weakly to the random variable $M_\infty =\sup \{S_0,S_1,\ldots \}$ which is finite a.s. This holds in particular if $\e |Y_1| <\infty$ and $\e Y_1 >0$.

\vspace*{0,1cm}
 We observe that for a general distribution $\mu$ on $\mathbb{R}$ and the associated LP with an arbitrary initial random variable $W_0\geq 0$ which is independent of $(Y_n)$ we have equality in law $W_n = \max\{ M_{n-1}, W_0+S_n\}$, for $n\geq 1$. This in turn implies that, given $\e(Y_1)>0$, $W_n\to M_{\infty}$ in law and thus the distribution of $M_{\infty}$ is the unique stationary measure for $(W_n)$, cf. also \textsc{Diaconis and Freedman} \cite[Theorem 4.1]{Diaconis}.

As already mentioned, the LP comes up naturally in the framework of single server queues and thus it was extensively studied over the past decades, see e.g. the seminal paper by \textsc{Kendall} \cite{Kendall} with references therein and cf. also the books by \textsc{Feller} \cite{fe71II}, \textsc{Borovkov} \cite{Borovkov1976} and \textsc{Asmussen} \cite{as00}. \textsc{Lindley} \cite{Lindley} was the first who investigated the limit behaviour of $(W_n)$ and discovered its connections with the Wiener-Hopf integral equations. More recently, asymptotics of the return probabilities of $(W_n)$ were computed by \textsc{Essifi, Peign\'{e} and Raschel} \cite{espera13}. 

The LP may be also viewed as a random walk with a certain barrier at zero and in this spirit we mention the reflected random walk (RRW), denoted by $(X_n^x)_{n\geq 0}$, which is defined analogously to $(W_n)$ but instead of the maximum function in \eqref{UniVarDef} one sets $X_0^x = x\geq 0$ and $X_n^x = |X_{n-1}^x-Y_n|$, for $ n\geq 1$.
There is an obvious and striking resemblance between the two processes and in this note we take advantage of this aspect. In particular, the question of recurrence of RRW received much attention in the literature, see \textsc{Peign\'{e} and Woess} \cite{pewo08} with references therein and \textsc{Kloas and Woess} \cite{klwo16} for a treatment of the multidimensional case.  From that perspective, one can use powerful methods related to stochastic dynamical systems in order to study recurrence of various processes. We partially apply such techniques to obtain a result concerning positive recurrence of the multidimensional LP in the final section.

The multidimensional counterpart of the LP arises from the studies on many server queueing models which were initiated by \textsc{Kiefer and Wolfowitz} \cite{Kiefer_Wolfowitz}. 
In this note we aim at finding sufficient conditions for recurrence of the multidimensional LP as well as for a process of which some coordinates are Lindley processes and the other are ordinary random walks. We focus mainly on the two-dimensional lattice case but we also present a satisfactory result for higher dimensions. 
More precisely, the paper is organized as follows: Section \ref{Sec_Walks} is devoted to the study of the asymptotic behaviour of a given random walk on integers which is evaluated at some random stopping times that are ladder epochs of a second independent random walk. Further, we take advantage of the result and construct a pair of examples of random walks with infinite second moment and investigate their recurrence.
In section \ref{Sec_MLP} we treat the two-dimensional LP in the lattice quadrant and investigate its recurrence under various assumptions on the tail behaviour of the underlying random walk. Among other methods, we 
apply the asymptotics obtained in Section \ref{Sec_Walks}. In the last paragraph we use a technique of local contractivity, which is related to stochastic dynamical systems, to study positive recurrence of the LP in higher dimensions. 

\vspace*{0,2cm}
\noindent
\textit{Notation.} We use the standard notation: $f(x)\sim g(x)$, as $x\to a$ if $f(x)/g(x)\to 1$, as $x$ tends to $a$. Similarly we write $f(x)=o(g(x))$, as $x\to a$ if $f(x)/g(x)\to 0$, as $x$ tends to $a$.

\section{Subordination tools for random walks} \label{Sec_Walks}

In this section we focus on the local asymptotic behaviour of time-changed (subordinated) random walks on the integer numbers. 
We consider an increasing random walk which is responsible for a random change of time. We usually assume that this random walk has heavy tails and 
in our studies on the LP it is supposed to coincide with a sequence of ladder epochs of some other random walk.
We then use the theory of regular variation to find exact asymptotics for the tails of the accordingly time-changed random walk. This enables us to present a vast class of examples of subordinated random walks and to handle the question concerning their recurrence. Although this topic is interesting in itself, our primary goal is to apply results of this section to find some criteria for the recurrence of the two-dimensional process of which one coordinate is a LP whereas the second is a random walk, cf. Theorem \ref{the: recurrence of (W_n,Z_n)}. 

Let $S_n=Y_1+\ldots +Y_n$ be an oscillating random walk such that $S_0=0$. We always assume that the distribution $\mu$ of the increment $Y$ is supported by $\mathbb{Z}$.
Since $(S_n)$ is oscillating, the first strict ascending ladder epoch $\tau =\tau (1) =\min \{ n\geq 1: S_n>0 \}$ is well-defined.
Following \textsc{Vatutin and Wachtel} \cite{vawa08}, for $\alpha ,\beta \in \mathbb{R}$ we consider the set 
\begin{align}\label{ali: definition of A}
\mathcal{A} = \{0<\alpha<1;\, |\beta|<1 \} \cup \{1<\alpha <2;\, |\beta |\leq 1\} \cup \{\alpha=1,2;\, \beta=0\}.
\end{align}
For $(\alpha ,\beta )\in \mathcal{A}$ we write $Y\in D(\alpha ,\beta )$ if the distribution of $Y$ belongs to the domain of attraction of the stable law with characteristic function
\begin{align*}
\Phi (\xi) = \exp \left\{ -c|\xi|^{\alpha} \left( 1-i\beta \frac{\xi}{|\xi|}\tan \frac{\alpha \pi }{2}\right) \right\},
\end{align*}
 for $c>0$.
If $1<\alpha \leq 2$ we assume that $\e(Y) =0$. It is known by \textsc{Doney} \cite{do95} that if $Y\in D(\alpha ,\beta)$ then 
\begin{align}\label{Rogozin_cond}
\p(S_n>0)\rightarrow \rho\in (0,1),\quad n\to \infty ,
\end{align}
where the parameter $\rho $ is given by
\begin{align}\label{ali: definition of rho}
\rho&= \frac{1}{2}+\frac{1}{\pi \alpha} \arctan\left(\beta \tan\frac{\pi \alpha}{2}\right).
\end{align}
Moreover, condition \eqref{Rogozin_cond} is equivalent to the existence of a slowly varying (at infinity) function $\ell$ such that
	\begin{align}\label{Tau_asympt}
		\p(\tau>n)\sim \frac{1}{\Gamma(\rho)\Gamma(1-\rho)n^\rho\ell(n)},\quad n\to \infty.
	\end{align}
Recall that a function $f$ is regularly varying of index $\gamma$ at infinity if 
$\lim_{x\rightarrow \infty} f(\lambda x)/f(x)= \lambda^\gamma$, for all $\lambda>0$,
and $f$ is called slowly varying if $\gamma=0$. Equation \eqref{Tau_asympt} means that $\tau $ belongs to the domain of attraction of the one-sided stable law of index $\rho$.
According to \cite[Theorem 3]{vawa08} and \cite[Theorem 10]{vawa2} for $\alpha =1$ or $\beta = -1$ we also have the following local result
\begin{align}
\p(\tau=n)\sim \frac{\rho }{\Gamma\left(\rho\right)\Gamma(1-\rho)n^{\rho+1}\ell(n) },\quad  n\rightarrow\infty, \label{ali: Vatutin Wachtel result}
\end{align}
with the same slowly varying function $\ell$ as in \eqref{Tau_asympt}. 

We study the local asymptotic behaviour of a random walk which is evaluated at ladder epochs of the random walk $(S_n)$. More precisely, we consider a finite range and centered random walk $(Z_n)$ on $\mathbb{Z}$ (i.e. the support of the law of $Z_1$ is bounded and $\e Z_1 =0$) and we look more closely at the tail decay of the random variable $Z_{\tau }$, where $\tau $ is the first strictly ascending ladder epoch of $(S_n)$.
The proof of the theorem below is based on the similar result obtained in \textsc{Bendikov and Cygan} \cite{cygan1} for the Green function of the subordinated random walk in $\mathbb{Z}^d$ but it requires numerous improvements and adjustments to the present setting.
To our best knowledge, this is the first result of this type in the centred but not necessarily symmetric case.

We emphasise that the scope of the theorem is wider than it is stated. One can consider an arbitrary increasing random walk $(\eta_n)$ on non-negative integers and then a new subordinated random walk $(Z_{\eta_n})$. The result is applicable given that the increments of $(\eta_n)$ behave locally as in \eqref{ali: Vatutin Wachtel result}. We obtain the local behaviour of the subordinated random walk without any assumption on the structure of the distribution of $\eta_1$, cf. \textsc{Bendikov, Cygan and Trojan} \cite{BCT} for the detailed discussion on the asymptotic behaviour of subordinated random walks under the assumption that the Laplace transform of $\eta_1$ is governed by a Bernstein function.

\begin{theorem}\label{THM_main}
Suppose that $(S_n)$ is an oscillating random walk such that its increment $Y\in D(\alpha,\beta)$.
Let $\tau$ be the first strict ascending ladder epoch of $(S_n)$. Assume that $(S_n)$ is independent of $(Z_n)$, then
\begin{align}\label{symm_asymt_Z_tau}
\p \left(Z_{\tau}=x\right)&\sim  \frac{C(\rho)}{|x|^{2\rho+1}\ell(|x|^2)} , \quad \text{as } |x|\rightarrow\infty,
\end{align}
where $\ell$ is the slowly varying function from (\ref{ali: Vatutin Wachtel result}) and
\begin{align}\label{C_rho}
C(\rho)=  \frac{\rho (2\sigma ^2)^{\rho }\Gamma\left(\rho+\frac{1}{2}\right)}{\sqrt{\pi}\Gamma(\rho)\Gamma(1-\rho)},\quad \textrm{with } \sigma ^2 =\mathrm{Var}(Z_1).
\end{align}
\end{theorem}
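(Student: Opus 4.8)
The plan is to use independence to express $Z_\tau$ as a subordinated (randomly time-changed) walk and then to read off its tail from the two asymptotics already at our disposal. Since $\tau$ and $(Z_n)$ are independent, conditioning on the value of $\tau$ gives
\begin{align*}
\p(Z_{\tau} = x) = \sum_{n\geq 1}\p(\tau = n)\,\p(Z_n = x).
\end{align*}
The first factor is controlled by the local ladder-epoch estimate \eqref{ali: Vatutin Wachtel result}, and the second by the local central limit theorem for the finite-range centred walk $(Z_n)$, i.e. $\p(Z_n = x)\sim (2\pi\sigma^2 n)^{-1/2}\exp(-x^2/(2\sigma^2 n))$ uniformly for $|x| = O(\sqrt n)$, assuming $Z$ aperiodic. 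Because $Z$ has bounded increments (say maximal jump $R$), this is complemented by two robust facts: the exact vanishing $\p(Z_n = x) = 0$ for $n < |x|/R$, and Gaussian moderate/large-deviation upper bounds $\p(Z_n = x)\le C n^{-1/2}\exp(-c x^2/n)$ valid for all $n,x$. These will justify interchanging the limit $|x|\to\infty$ with the infinite sum.

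Fix a small $\delta>0$. Guided by the scale $n\asymp |x|^2$, on which the Gaussian factor passes from super-exponentially small to order one, I would split the sum into $|x|/R\le n<\delta|x|^2$, the bulk $\delta|x|^2\le n\le |x|^2/\delta$, and $n>|x|^2/\delta$. On the first range the deviation bound, together with Potter's bounds for $\ell$, gives a contribution bounded by $\varepsilon_1(\delta)\,|x|^{-2\rho-1}/\ell(|x|^2)$, where $\varepsilon_1(\delta)=\int_0^\delta s^{-\rho-3/2}e^{-c/s}\,ds\to 0$ as $\delta\to 0$. On the last range one uses $\p(Z_n=x)\le C n^{-1/2}$ together with Karamata's theorem applied to the regularly varying tail $n^{-\rho-1}/\ell(n)$ to obtain a contribution of order $\delta^{\rho+1/2}\,|x|^{-2\rho-1}/\ell(|x|^2)$, again negligible relative to the claimed order once $\delta$ is small. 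Thus both peripheral ranges disappear in the iterated limit $\lim_{\delta\to0}\limsup_{|x|\to\infty}$.

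It then remains to analyse the bulk. There $|x|/\sqrt n\in[\sqrt\delta,1/\sqrt\delta]$, so the uniform local limit theorem applies with multiplicative error $1+o(1)$; inserting \eqref{ali: Vatutin Wachtel result} and using the uniform convergence theorem for slowly varying functions to replace $\ell(n)$ by $\ell(|x|^2)$ throughout the range, the bulk sum is asymptotic to
\begin{align*}
\frac{\rho}{\Gamma(\rho)\Gamma(1-\rho)\,\ell(|x|^2)\sqrt{2\pi\sigma^2}}\sum_{\delta|x|^2\le n\le |x|^2/\delta} n^{-\rho-3/2}\exp\!\left(-\frac{x^2}{2\sigma^2 n}\right).
\end{align*}
Substituting $n=|x|^2 t$ exhibits the last sum as a Riemann sum of mesh $|x|^{-2}$ for $|x|^{-2\rho-1}\int_\delta^{1/\delta} t^{-\rho-3/2}\exp(-1/(2\sigma^2 t))\,dt$. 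Letting $|x|\to\infty$ and then $\delta\to0$, the integral extends to $(0,\infty)$ and, via the substitution $u=1/(2\sigma^2 t)$, equals $(2\sigma^2)^{\rho+1/2}\Gamma(\rho+1/2)$; collecting the prefactors reproduces exactly the constant $C(\rho)$ of \eqref{C_rho}.

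The main obstacle is not this computation but the uniformity needed to interchange the limit in $|x|$ with the summation. A bare local limit theorem with additive error $o(n^{-1/2})$ is useless in the regime $n\ll |x|^2$, where the genuine probability is super-exponentially small, so one is forced to exploit the exact support constraint and the exponential deviation bounds above, and to quantify the slowly varying factor uniformly across the bulk through Potter's bounds. Carrying the error terms so that they survive as $o\!\left(|x|^{-2\rho-1}/\ell(|x|^2)\right)$ in the double limit, rather than merely pointwise in $n$, is the delicate part. Aperiodicity of $Z$ is assumed here so that the local limit theorem carries no lattice-span factor; otherwise $x$ must be restricted to the appropriate coset and $C(\rho)$ acquires the span of $Z$ as an extra factor.
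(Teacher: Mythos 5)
Your proposal is correct, and its ingredients coincide with the paper's (the subordination identity, a Gaussian off-diagonal bound for the finite-range walk, the local CLT, Potter's bounds, Karamata's theorem), but the decomposition is genuinely different. The paper makes a \emph{single} cut at $n=\bigl[|x|^{5/3}\bigr]$: for $n\le|x|^{5/3}$ the Gaussian bound of Alexopoulos (your ``robust fact'' --- you should cite it rather than assert it) gives $x^2/n\ge|x|^{1/3}$, so that entire range is $O\bigl(e^{-c|x|^{1/3}/2}\bigr)$, super-polynomially small with no free parameter; for $n>|x|^{5/3}$ it inserts the LCLT in \emph{additive} form $p_n(x)=\overline{p}_n(x)+O(n^{-1})$ and sums the error against $\p(\tau=n)\asymp n^{-\rho-1}/\ell(n)$ --- the exponent $5/3$ is chosen precisely so that $5(\rho+1)/3>2\rho+1$ (the difference is $(2-\rho)/3$, with enough margin left to absorb the Potter factor since $\rho<1$), making the error term $o\bigl(|x|^{-2\rho-1}/\ell(|x|^2)\bigr)$; the Gaussian main term is then evaluated in one stroke over all of $(|x|^{5/3},\infty)$ by a change of variables and dominated convergence, with a Potter-derived dominating function, producing $\Gamma\bigl(\rho+\frac12\bigr)$ directly and with no truncation parameter. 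Your three-zone split around the diffusive scale with the iterated limit $\lim_{\delta\to 0}\limsup_{|x|\to\infty}$ reaches the same constant; it trades the paper's dominated-convergence step for Riemann sums plus the uniform convergence theorem on the compact ratio window $[\delta,1/\delta]$, which is arguably more elementary and needs only a \emph{multiplicative} LCLT on the bulk, at the cost of carrying $\delta$-errors through three ranges (note that in your first zone Potter's bound costs an extra factor $s^{-\varepsilon}$, which $e^{-c/s}$ absorbs --- worth saying explicitly, as $s$ there goes down to order $|x|^{-1}$). Your far zone $n>|x|^2/\delta$ is, in the paper's version, simply the integrable endpoint $s\to 0^{+}$ of $\int_0^\infty e^{-s}s^{\rho-1/2}\,\mathrm{d}s$ and needs no separate treatment. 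Finally, your remark on aperiodicity is apt: the theorem's hypotheses on $(Z_n)$ do not state it, yet the appeal to Lawler--Limic requires it, so your explicit flag (with the lattice-span correction to $C(\rho)$ otherwise) is a point where your write-up is more careful than the original.
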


\begin{proof}
We set $p_n(x)=\p(Z_n=x)$ and write
\begin{align*}
\p(Z_{\tau}=x)=
\sum_{n=1}^{\left[|x|^{5/3}\right]} p_n(x)  \p(\tau=n)\ + \!\!\!\!\!\!
\sum_{n>\left[|x|^{5/3}\right]} \!\!\! \!  p_n(x) \p(\tau=n)
= I_1(x)+I_2(x).
\end{align*}
Let $\overline{p}_n(x)= (\sqrt{2\pi n}\sigma)^{-1} e^{-|x|^2/(2\sigma^2 n)}$ and $E(n,x)=p_n(x)-\overline{p}_n(x)$. 
Applying \textsc{Lawler and Limic} \cite[Theorem 2.1.1]{lali10} (see the discussion following Proposition 2.1.2), we get that for a centered irreducible and aperiodic random walk in $\mathbb{Z}^d$ with finite third moment there is some $C>0$ such that
\begin{align}
|E(n,x)|&\leq C n^{-\frac{d+1}{2}}, \quad n\geq1.\label{ali: upper bound for E(n,x)}
\end{align}
We decompose $I_2(x)$ into two parts 
\begin{align*}
I_2(x)=\sum_{n>\left[|x|^{5/3}\right]} \overline{p}_n(x) \p(\tau=n) +\sum_{n>\left[|x|^{5/3}\right]} E(n,x)  \p(\tau=n) =I_{21}(x)+I_{22}(x),
\end{align*}
and first we establish that 
$I_{22}(x)=o\left( |x|^{-2\rho-1}/\ell(|x|^2) \right)$.
Our assumptions combined with \eqref{ali: Vatutin Wachtel result} and \eqref{ali: upper bound for E(n,x)} for $d=1$ imply that for some $C>0$
\begin{align*}
I_{22}(x)\leq C\!\!\! \sum_{n>\left[|x|^{5/3}\right]} \frac{1}{ n^{\rho+2}\ell(n)} \sim C\int_{|x|^{5/3}}^\infty \frac{1}{ t^{\rho+2}\ell(t)} \mathrm{d}t, \quad \text{as } |x|\rightarrow \infty.
\end{align*}
By \textsc{Bingham, Goldie and Teugels} \cite[Proposition 1.5.10]{bi87}, we have
\begin{align*}
 \int_{|x|^{5/3}}^\infty \frac{1}{t^{\rho+2} \ell(t)} \mathrm{d}t &\sim
  \frac{1}{( \rho+1) |x|^{5(\rho+1)/3}\ell\left(|x|^{5/3}\right)} , \quad \text{as } |x|\rightarrow \infty
\end{align*}
and whence, for $|x|$ large enough,
\begin{align*}
I_{22}(x)|x|^{2\rho+1} \ell(|x|^2)
&\leq 
C\frac{1}{(\rho+1)|x|^{(2-\rho)/3}} \frac{\ell(|x|^2)}{\ell\left(|x|^{5/3}\right)}.
\end{align*}
We show that the right hand side of the last inequality tends to 0. To do so we find an upper bound for the last fraction with the slowly varying function. For that we apply
Potter bounds \cite[Theorem 1.5.6]{bi87} which assure that for any $\varepsilon >0$ there exists $X\geq 0$ such that
\begin{align*}
\frac{\ell(|x|^2)}{\ell\left(|x|^{5/3}\right)}\leq 2 \max \left\{ \left( \frac{|x|^2}{|x|^{5/3}}\right)^{\varepsilon}, \left( \frac{|x|^2}{|x|^{5/3}}\right)^{-\varepsilon}  \right\},\quad |x|\geq X.
\end{align*}
If we choose $\varepsilon =1$ we obtain that
$\ell(|x|^2)\leq 2|x|^{1/3}\ell\left(|x|^{5/3}\right)$, for $|x|$ large enough, and therefore
\begin{align*}
I_{22}(x)|x|^{2\rho+1} \ell(|x|^2)
& \leq
C\frac{1}{(\rho+1)|x|^{(1-\rho)/3}} \rightarrow 0, \quad \text{as } |x|\rightarrow \infty,
\end{align*}
as desired.
Next, with $I_{21}(x)$ we proceed as follows. For $\vert x \vert \rightarrow \infty$,
\begin{align*}
I_{21}(x)&\sim C_1 \sum_{n>\left[|x|^{5/3}\right]} e^{-\frac{|x|^2}{2\sigma^2 n}} \frac{1} {n^{\rho+3/2}\ell(n)}
\sim C_1\int_{|x|^{5/3}}^\infty  e^{-\frac{|x|^2}{2\sigma^2 t}} \frac{1}{ t^{\rho+3/2}\ell(t)}\mathrm{d}t,
\end{align*}
where $C_1=\rho (\sigma \sqrt{2\pi}\Gamma(\rho)\Gamma(1-\rho))^{-1}$. By a suitable change of variables we have
\begin{align}
\int_{|x|^{5/3}}^\infty  e^{-\frac{|x|^2}{2\sigma^2 t}} \frac{1}{ t^{\rho+3/2}\ell(t)}\mathrm{d}t
= \frac{(2\sigma ^2)^{\rho+\frac{1}{2}}}{|x|^{2\rho+1} \ell(|x|^2)}  \int_0^{|x|^{1/3}/(2\sigma^2)}\!\!\!\! e^{-s} s^{\rho-\frac{1}{2}}\frac{\ell(|x|^2)   }{\ell\left(|x^2|/(2\sigma^2 s)\right)}\mathrm{d}s.\label{Int111}
\end{align}
We choose an arbitrary $\varepsilon \in (0,(2\rho +1)/2)$. By Potter bounds we get that for $|x|$ big enough, 
\begin{align*}
\ell(|x|^2)\leq 2 \max\left\{(2\sigma^2 s)^{-\varepsilon} ,(2\sigma^2 s)^{\varepsilon}\right\}\ell\left(|x|^2/(2\sigma^2 s)\right)
\end{align*}
and this implies
\begin{align*}
e^{-s} s^{\rho-\frac{1}{2}}\frac{\ell(|x|^2)   }{\ell\left(|x^2|/(2\sigma^2 s)\right)} \leq 2\sigma^{-2\varepsilon}e^{-s}s^{\rho +1/2-\varepsilon -1}\mathbf{1}_{(0,1)}(s) + 
2\sigma^{2\varepsilon}e^{-s}s^{\rho +1/2+\varepsilon -1}\mathbf{1}_{[1,\infty)}(s).
\end{align*}
With this estimate we are allowed to apply the dominated convergence theorem to the last integral in \eqref{Int111} which thus converges to $\Gamma (\rho +1/2)$. Hence
\begin{align*}
I_{21}(x)&\sim   \frac{\rho (2\sigma^{2})^{\rho }\, \Gamma\left(\rho+\frac{1}{2}\right)}{\sqrt{\pi}\, \Gamma(\rho)\Gamma(1-\rho)}\frac{1}{ |x|^{2\rho+1}\ell(|x|^2)},\quad \textrm{as}\ |x|\to \infty.
\end{align*}

We are left to show that 
$I_{1}(x)=o\left(|x|^{-2\rho-1}/ \ell(|x|^2) \right)$.
Here we use the assumption that the random walk $(Z_n)$ has finite range. The Gaussian upper bound of \textsc{Alexopoulos} \cite[Theorem 1.8]{al02} yields that there is $c>0$ such that $p_n(x) \leq cn^{-1/2}e^{-cx^2/n}$, for all $n\in \mathbb{N}$ and $x\in \mathbb{Z}$. We have 
\begin{align*}
e^{-\frac{cx^2}{n}} =e^{-\frac{cx^2}{2n}} \cdot e^{-\frac{cx^2}{2n}}  \leq e^{-\frac{c|x|^{1/3}}{2}}\cdot  e^{-\frac{cx^2}{2n}} \leq e^{-\frac{c|x|^{1/3}}{2}}, \quad \mathrm{for} \ n\leq \big[|x|^{5/3}\big]
\end{align*} 
and therefore we obtain
\begin{align*}
I_{1}(x)
\leq c\, e^{-\frac{c|x|^{1/3}}{2}} \sum_{n=1}^{\left[|x|^{5/3}\right]} n^{-1/2} e^{-\frac{cx^2}{2n}} \p(\tau=n)\leq c\, e^{-\frac{c|x|^{1/3}}{2}} .
\end{align*}
Observe that $x^{-\nu}\ell (|x|^2)$ tends to $0$ for any $\nu >0$. We conclude that
\begin{align*}
|x|^{2\rho+1} \ell(|x|^2) I_1  (x)\leq c |x|^{2\rho+1+\nu}e^{-\frac{c|x|^{1/3}}{2}}
\end{align*}
and the last quantity tends to 0 as $|x|\rightarrow \infty$, what finishes the proof.
\end{proof}

\begin{corollary}\label{pr: Z is in the domain of attraction}
Under the assumptions of Theorem \ref{THM_main}, $Z_\tau \in D(2\rho ,0)$.
\end{corollary}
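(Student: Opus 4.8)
The plan is to read off the membership in the domain of attraction directly from the sharp local estimate \eqref{symm_asymt_Z_tau} of Theorem~\ref{THM_main}, by converting the point asymptotics of $\p(Z_\tau=x)$ into genuine tail asymptotics for $\p(Z_\tau>x)$ and $\p(Z_\tau<-x)$, and then appealing to the classical tail characterisation of stable domains of attraction (Gnedenko--Kolmogorov, cf. \cite[Ch.~XVII]{fe71II}). As a preliminary I would record that the target parameters are admissible: since $\rho\in(0,1)$ by \eqref{Rogozin_cond}, we have $2\rho\in(0,2)$, so $(2\rho,0)\in\mathcal{A}$ in every case of \eqref{ali: definition of A} (namely $2\rho\in(0,1)$ with $|0|<1$, or $2\rho\in(1,2)$ with $|0|\le1$, or $2\rho=1$ with $\beta=0$).

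Next I would pass from the local to the tail behaviour. Because $\ell$ is slowly varying, so is $t\mapsto\ell(t^2)$, hence $t\mapsto t^{-(2\rho+1)}/\ell(t^2)$ is regularly varying of index $-(2\rho+1)<-1$ and is therefore eventually decreasing and summable at infinity. Using the termwise asymptotics $\p(Z_\tau=y)\sim C(\rho)\,y^{-(2\rho+1)}/\ell(y^2)$ from \eqref{symm_asymt_Z_tau}, comparing the tail sum with the corresponding integral, and applying Karamata's theorem (as in \cite[Prop.~1.5.10]{bi87}, already invoked in the proof above), I obtain
\[
\p(Z_{\tau}>x)=\sum_{y>x}\p(Z_{\tau}=y)\sim\int_x^\infty\frac{C(\rho)}{t^{2\rho+1}\ell(t^2)}\,\mathrm{d}t\sim\frac{C(\rho)}{2\rho}\,\frac{1}{x^{2\rho}\ell(x^2)},\qquad x\to\infty.
\]
Since the right-hand side of \eqref{symm_asymt_Z_tau} depends on $x$ only through $|x|$, the identical computation gives $\p(Z_{\tau}<-x)\sim\tfrac{C(\rho)}{2\rho}\,x^{-2\rho}/\ell(x^2)$. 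Thus $\p(|Z_\tau|>x)$ is regularly varying of index $-2\rho$, and the two tails are asymptotically equal, so the tail-balance ratios both tend to $\tfrac12$; this is exactly the condition forcing the skewness parameter of the limiting stable law to vanish, i.e.\ $\beta=0$.

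It then remains to invoke the classical criterion: a lattice law whose two tails are regularly varying of common index $-2\rho$ with balance $\tfrac12,\tfrac12$ lies in $D(2\rho,0)$, which settles the cases $2\rho\neq1$; the boundary value $2\rho=1$ ($\rho=\tfrac12$) is the symmetric Cauchy case and is handled identically. The one point requiring a short separate argument is the centering in the range $1<2\rho<2$, where membership demands $\e Z_\tau=0$. Here the tail estimate gives $\e|Z_\tau|<\infty$, and conditioning on the variable $\tau$, which is independent of $(Z_n)$, yields $\e Z_\tau=\sum_{k\ge1}\p(\tau=k)\,\e Z_k=0$ because $\e Z_1=0$.

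The main obstacle I anticipate is precisely the first conversion step—justifying that the termwise local asymptotics \eqref{symm_asymt_Z_tau} may be summed into the tail asymptotics—rather than any new probabilistic content. This rests on the regular-variation and monotone-density machinery (eventual monotonicity of the comparison sequence, summation of regularly varying sequences, and Karamata's theorem), all of which is routine once the leading local behaviour and the admissibility $(2\rho,0)\in\mathcal{A}$ are in place.
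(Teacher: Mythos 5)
Your proposal is correct and follows essentially the same route as the paper: both convert the local estimate \eqref{symm_asymt_Z_tau} into tail asymptotics $1-F(x)\sim C(\rho)/(2\rho\, x^{2\rho}\ell(x^2))$ via the sum-to-integral comparison and Karamata's theorem \cite[Prop.~1.5.10]{bi87}, note the symmetry of the two tails so that the balance ratios tend to $\tfrac12$ (forcing $\beta=0$), and then invoke the Gnedenko--Kolmogorov tail criterion for membership in $D(2\rho,0)$. Your two additional checks --- the admissibility $(2\rho,0)\in\mathcal{A}$ and the centering $\e Z_\tau=0$ for $2\rho>1$ via conditioning on the independent $\tau$ --- are sound bookkeeping that the paper leaves implicit, but they do not change the argument.
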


\begin{proof}
Let $F(x)=\p(Z_{\tau}\leq x)$.
By Theorem \ref{THM_main}, as $x\rightarrow \infty$,
\begin{align*}
1-F(x)=\sum_{k>x}  P(Z_\tau =k)
 \sim C(\rho) \sum_{k>x}  \frac{1}{k^{2\rho+1}\ell(k^2)}
  \sim C(\rho) \int_{x}^\infty  \frac{1}{t^{2\rho+1}\ell(t^2)}\mathrm{d}t.
\end{align*}
Hence, by \cite[Prop. 1.5.10]{bi87},
$1-F(x)\sim C(\rho) /(2\rho \, x^{2\rho}\ell(x^2)) $ at infinity.
Asymptotics \eqref{symm_asymt_Z_tau} are symmetric in $x$ and whence one easily shows that
$F(-x)/(1-F(x))$ tends to $1$ as $x$ goes to infinity.
We conclude that $ 1-F(x)+F(-x)\sim C(\rho)/(\rho \, x^{2\rho} \ell(x) )$ at infinity.
The conditions of \textsc{Gnedenko and Kolmogorov} \cite[\S 35, Thm. 2]{gnko69} are fulfilled and we obtain that $Z_{\tau}$ belongs to the domain of attraction of the stable law of index $2\rho$. Since $F(-x)/(1-F(x)+F(-x))$ tends to $1/2$ as $x$ goes to infinity, the skewness parameter $\beta$ equals $0$.
\end{proof}

We next present a variety of examples of random walks on $\mathbb{Z}$ which are constructed according to the discussed procedure of the random change of time. 
For that reason we consider a sequence of strict ascending ladder times $\tau (n)$ which are defined via
\begin{align*}
\tau(0) =0,\quad \tau (k+1) = \inf \{ n>\tau(k) :\, S_n > S_{\tau(k)}\}.
\end{align*}
As we proved that $Z_ {\tau}\in D(2\rho ,0)$, we get that $\e(|Z_{\tau}|^\gamma)<\infty$, for all $\gamma<2\rho$.
First we handle the case $\rho \neq 1/2$.

\begin{proposition} \label{pro: for which rho is Z_tau recurrent/tranisent}
If $(Z_n)$ is symmetric then under the conditions of Theorem \ref{THM_main}, the random walk $(Z_{\tau(n)})$ is transient if $0<\rho<\frac{1}{2}$ and recurrent 
if $\frac{1}{2} <\rho <1$.
\end{proposition}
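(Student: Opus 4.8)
The plan is to recognise $(Z_{\tau(n)})_{n\ge 0}$ as an ordinary random walk on $\mathbb{Z}$ and then to read off its type from the index of the stable law that attracts its increments. First I would record that the ladder epochs form a renewal sequence: by the strong Markov property the gaps $\tau(k+1)-\tau(k)$ are i.i.d.\ copies of $\tau=\tau(1)$, and since $(Z_n)$ is independent of $(S_n)$, the increments
\[
Z_{\tau(k+1)}-Z_{\tau(k)}=\sum_{j=\tau(k)+1}^{\tau(k+1)}(Z_j-Z_{j-1})
\]
are independent across $k$ (disjoint blocks of independent $Z$-steps, indexed by a partition that is independent of the $Z$-steps) and each has the common law of $Z_{\tau}$. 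Hence $(Z_{\tau(n)})$ is a genuine random walk whose step distribution is exactly that of $Z_\tau$, and the problem reduces to deciding recurrence of a random walk with this increment.

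By Corollary \ref{pr: Z is in the domain of attraction} the step law lies in $D(2\rho,0)$, i.e.\ it is attracted to the symmetric stable law of index $2\rho$; moreover $Z_\tau$ is symmetric, so its characteristic function $\psi(t)=\e(e^{itZ_\tau})$ is real. I would treat the two regimes separately. For $\tfrac12<\rho<1$ one has $2\rho>1$, and membership in the domain of attraction of a stable law of index strictly greater than $1$ forces $\e|Z_\tau|<\infty$; by symmetry $\e Z_\tau=0$. The Chung--Fuchs theorem then gives recurrence of any real, genuinely one-dimensional random walk with zero mean, which settles this case in one line (nondegeneracy is guaranteed by $\sigma^2=\mathrm{Var}(Z_1)>0$, so the step law is not supported on a single point).

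For $0<\rho<\tfrac12$ I would invoke the spectral (Chung--Fuchs--Ornstein) recurrence criterion: after reducing to the minimal lattice so that the walk is aperiodic, it is recurrent if and only if $\int_{-\pi}^{\pi}\mathrm{Re}\,(1-\psi(t))^{-1}\,\mathrm{d}t=\infty$, and the only singularity of the integrand sits at $t=0$. The regularly varying tail behind Corollary \ref{pr: Z is in the domain of attraction}, namely $\p(|Z_\tau|>x)$ regularly varying of index $-2\rho$, yields through the standard symmetric Tauberian relation that $1-\psi(t)\sim c\,|t|^{2\rho}\widetilde\ell(1/|t|)$ as $t\to0$, for some slowly varying $\widetilde\ell$ and $c>0$. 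Since $\psi$ is real, the integrand behaves like $t^{-2\rho}$ up to a slowly varying factor near the origin and is bounded away from $0$, so convergence is governed by $\int_0^{\delta}t^{-2\rho}/\widetilde\ell(1/t)\,\mathrm{d}t$, which is finite by Karamata's theorem precisely because $2\rho<1$. Thus the integral converges and the walk is transient.

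The main obstacle I anticipate is the transient regime: making the passage from the regularly varying tail of $Z_\tau$ to the local behaviour $1-\psi(t)\sim c\,|t|^{2\rho}\widetilde\ell(1/|t|)$ fully rigorous (a symmetric Tauberian argument in the spirit of \cite{bi87}), and then controlling the slowly varying factor in the integral via Karamata/Potter bounds so that convergence hinges solely on the exponent $2\rho<1$. The recurrent regime is comparatively immediate once the increments are identified, since it reduces to the mean-zero Chung--Fuchs statement.
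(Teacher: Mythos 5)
Your proposal is correct and follows essentially the same route as the paper: for $\tfrac12<\rho<1$ the paper likewise deduces $\e |Z_\tau|<\infty$ and $\e Z_\tau=0$ by symmetry and concludes recurrence, while for $0<\rho<\tfrac12$ it passes from the regularly varying tail $H(x)\sim C(\rho)/(\rho\, x^{2\rho}\ell(x^2))$ to $1-\phi(t)\sim C_1(\rho)\,t^{2\rho}\ell(t^{-2})$ via Pitman's Tauberian theorem (your ``symmetric Tauberian relation'') and then verifies finiteness of $\int_0^{\epsilon}(1-\phi(t))^{-1}\,\mathrm{d}t$ under the Chung--Fuchs criterion, handling aperiodicity exactly as you indicate. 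The only cosmetic difference is that where you invoke Karamata's theorem with Potter bounds, the paper bounds the slowly varying factor directly by $\ell(t^{-2})>t^{2\nu}$ with $2(\rho+\nu)<1$, which is the same estimate.
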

\begin{proof}
If $\frac{1}{2}<\rho<1$ then $\e(| Z_\tau |)<\infty$ and by symmetry we have $E(Z_\tau )=0$ which yields recurrence.
If $0<\rho<\frac{1}{2}$ we set $F(x)=\p(Z_\tau \leq x)$ and let $H(x)=1-F(x)+F(-x)$ be the tail function. Then by symmetry and Theorem \ref{THM_main}, for some $C>0$,
\begin{align*}
\frac{H(x)}{1-F(x)}&=\frac{2(1-F(x))+P(Z_\tau =x)}{1-F(x)} \sim 2+\frac{C(\rho) 2 \rho \, x^{2\rho} \ell(x^2)}{Cx^{2\rho+1}\ell(x^2)} \rightarrow 2, \quad \text{as } x\rightarrow \infty.
\end{align*}
Thus $H(x) \sim 2(1-F(x)) \sim C(\rho)/(\rho \, x^{2\rho} \ell(x^2))$ at infinity.

Let $\phi(t)=\e(e^{itZ_{\tau}})$ be the characteristic function of $Z_{\tau}$. By symmetry it is a real and even function. 
The result by \textsc{Pitman} \cite[Theorem 1]{pi68} implies that, as $t\to 0$,
\begin{align*}
1-\phi(t)&\sim\frac{\pi\, H(t^{-1})}{2\Gamma(2\rho) \sin(\rho\pi)}
\sim C_1(\rho) t^{2\rho}\ell (t^{-2}),\ \textrm{with }
C_1(\rho)=C(\rho)\frac{\pi }{4\rho\Gamma(2\rho) \sin(\rho\pi)} .
\end{align*}
To prove transience we apply the Chung and Fuchs criterion \cite{chfu51}, see also \textsc{Spitzer} \cite[Ch. 2, Sec. 8, T2]{sp76}. Since the random walk $(Z_{\tau (n)})$ is aperiodic (according to \cite[Ch. 1, Sec. 2, Def. D2]{sp76}), $\phi(\theta)=1$ if and only if $\theta= 2k\pi$, $k\in \mathbb{Z}$, and whence it suffices to prove that $\int_{0}^{\epsilon} (1-\phi(t))^{-1}\mathrm{d}t$ is finite for small $\epsilon >0$ which in view of the previous formula is equivalent to the convergence of $\int_{0}^{\epsilon}(t^{2\rho} \ell(t^{-2}))^{-1}\mathrm{d}t$.
For any $\nu>0$ we have $\ell(t^{-2})>t^{2\nu}$, for $t>0$ small enough. Choosing $\nu$ such that $2(\rho+\nu)<1$ the considered integral converges.
\end{proof}

In the (critical) case $\rho=\frac{1}{2}$ we give an example of a recurrent random walk $(Z_{\tau (n)})$ with increments that have no finite first moment.
First we recall an important notion of $\alpha$-conjugate pairs from the theory of regular variation which we extract from \textsc{Doney} \cite{do82}.

For a given slowly varying function $\ell$ set $f(x)=x^\alpha \ell(x)$, with some $\alpha>0$. By \cite[Theorem 1.5.12]{bi87}, there is a regularly varying function $g$ of index $1/\alpha$ and such that $g(f(x))\sim x$ at infinity. Since $g$ varies regularly, $g(x)=x^{1/\alpha} \ell^{\ast}_\alpha (x)$, for some slowly varying $\ell^{\ast}_\alpha$. By definition, $\ell^{\ast}_\alpha$ satisfies 
\begin{align}\label{alpha_cojug}
 ( \ell(x) )^{1/\alpha } \ell^{\ast}_\alpha (x^\alpha \ell(x)) \to 1,\ \textrm{equivalently }\ (\ell^{\ast}_\alpha (x))^{\alpha}\ell (x^{1/\alpha}\ell^{\ast}_\alpha (x))\to 1,\ \ \text{as } x \rightarrow \infty.
\end{align}
The function $\ell^{\ast}_\alpha$ is called the $\alpha$-conjugate of the function $\ell$. 
The way to remember the meaning of $\ell^{\ast}_\alpha$ is that $y\sim x^\alpha \ell (x)$, when $x$ goes to infinity, if and only if $x\sim y^{1/\alpha}\ell^{\ast}_\alpha (y)$, as $y$ goes to infinity.
One easily checks that if 
\begin{align}
\lim_{x\rightarrow\infty}\frac{\ell(x)}{\ell(x^\alpha \ell(x))}= C(\alpha)>0 \quad &\text{then}\quad
\ell_\alpha^* (x)\sim \left(C(\alpha) \ell(x)\right)^{-\frac{1}{\alpha}}, \quad \text{as } x\rightarrow \infty. \label{ali: alpha conjugate }
\end{align}
This holds for many slowly varying functions, for example $\log x$, $\log \log x$ etc. 

\begin{example}
Let $(S_n)$ be an oscillating random walk such that its increment $Y$ belongs to the domain of attraction of the normal distribution. 
It is known from \textsc{Feller} \cite[Ch. XVII, Sec. 5, Thm. 1a]{fe71II} that it holds if and only if the truncated variance of $Y$ is slowly varying, that is 
\begin{align}
\e \left( Y^2 \IndFun{(|Y|\leq y)} \right) \sim \frac{2}{\ell(y)},\quad\text{ as } y\rightarrow \infty, \label{ali: truncated variance}
\end{align}
for some slowly varying function $\ell $. We additionally assume that $\e(Y^2)=\infty$ and $\e(S_{\tau})<\infty$. Then the result by \textsc{Uchiyama} \cite[Thm. 1.2 and Rem. 2]{uc11} implies that
\begin{align*}
\p(\tau>n)&\sim \frac{1}{\sqrt{\pi} \e(S_\tau) n^{1/2}\ell^{\ast} (n)},\quad\text{as } n\rightarrow\infty,
\end{align*}
where $\ell^{\ast} = \ell^{\ast}_2$ is the $2$-conjugate of $\ell$ as defined in \eqref{alpha_cojug}. 
By \cite{vawa08} we obtain that
\begin{align*}
\p(\tau=n)&\sim \frac{1}{2\sqrt{\pi}\e(S_\tau) n^{3/2}\ell^{\ast} (n)},\quad\text{as } n\rightarrow\infty.
\end{align*}
Next, if we take a symmetric random walk $(Z_n)$ then 
Theorem \ref{THM_main} gives us
\begin{align*}
\p(Z_\tau =x)&\sim\frac{1}{\sqrt{2}\pi\e(S_\tau)  x^2\ell^*(x^2)},\quad\text{as } x\rightarrow\infty.
\end{align*}
As in the proof of Proposition \ref{pro: for which rho is Z_tau recurrent/tranisent}, 
\begin{align*}
H(x)=1-F(x)+F(-x)\sim \frac{1}{\sqrt{2}\pi \e(S_\tau) x \ell^*(x^2)},\quad\text{as } x\rightarrow\infty,
\end{align*}
and 
\begin{align*}
1-\phi(t)\sim\frac{\pi}{2} H(t^{-1})\sim \frac{t}{2^{3/2}\e(S_\tau)\ell^*(t^{-2})},\quad\text{as } t\rightarrow 0.
\end{align*}
Thus, to study recurrence of $(Z_{\tau (n)})$ we investigate convergence of the integral $\int_{0}^{\epsilon} \frac{1}{1-\phi(t)}\mathrm{d}t$ around zero. To simplify the calculations we restrict our attention to the specific choice of the slowly varying function in \eqref{ali: truncated variance} and for that reason we take
$\ell(x)=\log ^\eta x$, for $\eta\in \mathbb{R}$.
We immediately get by \eqref{ali: alpha conjugate } that 
$\ell^*(x)\sim 2^{\frac{\eta}{2}}\log^{-\frac{\eta}{2}}x$ at infinity.
Finally we are left with the integral $\int_0^\epsilon t^{-1} \log^{-\frac{\eta}{2} }(t^ {-2} )\mathrm{d}t$
which diverges for $\eta\leq2$ (and we get recurrence), whereas for $\eta>2$ it converges and implies transience.
Moreover, one easily verifies that $\e(|Z_\tau|)=\infty$ for $\eta\geq -2$ and $\e(|Z_\tau|)<\infty$ otherwise. Thus we have the following possibilities
\begin{itemize}
\item for $\eta<-2$ the random walk $(Z_{\tau (n)})$ is recurrent with finite first absolute moment,
\item for $-2\leq \eta\leq 2$ the random walk $(Z_{\tau (n)})$ is recurrent and $\e(|Z_\tau |)=\infty$,
\item for $2<\eta$ the random walk $(Z_{\tau (n)})$ is transient.
\end{itemize}
\end{example}

We end this section with a result concerning Theorem \ref{THM_main} when the increments of the random walk $(S_n)$ have finite second moment. 

\begin{proposition}\label{REmark}
Let $(S_n)$ be an oscillating random walk with the increment $Y$ having finite second moment and let $(Z_n)$ be a  centred and finite range random walk on $\mathbb{Z}$ independent of $(S_n)$. Then there is some $C>0$ such that 
\begin{align}\label{second_mom}
\lim_{y\rightarrow \infty} y^2\p \left(Z_{\tau}=y\right) =C,
\end{align}
and in this case $\e(|Z_{\tau}|)=\infty$. Equation \eqref{second_mom} holds also when $Y$ is symmetric and has a density.
\end{proposition}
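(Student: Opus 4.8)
The plan is to read Theorem \ref{THM_main} at the boundary parameters and simply re-run its proof. An oscillating walk with $\e(Y^2)<\infty$ is automatically centred (a nonzero mean would force a drift), so $Y$ lies in the domain of attraction of the normal law, i.e. $Y\in D(2,0)$, and \eqref{ali: definition of rho} gives $\rho=\tfrac12$. Moreover the classical Spitzer--Rogozin estimate for centred finite-variance walks, $\p(\tau>n)\sim c_\tau\,n^{-1/2}$ with $c_\tau>0$, compared with \eqref{Tau_asympt} at $\rho=\tfrac12$, forces the slowly varying function $\ell$ to converge to a positive constant $\ell_\infty=(\pi c_\tau)^{-1}$. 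Once this is in place, \eqref{symm_asymt_Z_tau} with $\rho=\tfrac12$ reads $\p(Z_\tau=x)\sim C(\tfrac12)\,|x|^{-2}/\ell(|x|^2)$, and $\ell(|x|^2)\to\ell_\infty$ turns this into \eqref{second_mom} with $C=C(\tfrac12)/\ell_\infty>0$. So the whole issue is to make the proof of Theorem \ref{THM_main} run at $\rho=\tfrac12$.

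The only ingredient of that proof genuinely tied to $Y\in D(\alpha,\beta)$ is the \emph{local} ladder asymptotic \eqref{ali: Vatutin Wachtel result}, and the excerpt supplies it only for $\alpha=1$ or $\beta=-1$; the present case $\alpha=2$, $\beta=0$ is exactly the one it omits. Securing
\begin{align*}
\p(\tau=n)\sim\frac{1}{2\pi\ell_\infty}\,n^{-3/2},\qquad n\to\infty,
\end{align*}
the $\rho=\tfrac12$, constant-$\ell$ instance of \eqref{ali: Vatutin Wachtel result}, is therefore the main obstacle and is precisely where the statement splits into two regimes. When $Y$ is symmetric with a density one has $\p(S_k\le 0)=\tfrac12$ for every $k$, so Sparre--Andersen's identity gives the ladder generating function explicitly,
\begin{align*}
\sum_{n\ge 0}\p(\tau>n)\,s^n=\exp\left(\sum_{k\ge 1}\frac{s^k}{k}\,\p(S_k\le 0)\right)=(1-s)^{-1/2},
\end{align*}
whence $\p(\tau>n)=\binom{2n}{n}4^{-n}\sim(\pi n)^{-1/2}$ and $\p(\tau=n)=\p(\tau>n-1)-\p(\tau>n)\sim\tfrac12(\pi n^3)^{-1/2}$ with no extra regularity needed. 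In the lattice case this shortcut is unavailable (an atom of $S_k$ at $0$ spoils $\p(S_k\le 0)=\tfrac12$, and one cannot in general difference a regularly varying tail to reach the local form for free), so here I would invoke a local limit theorem for ascending ladder epochs of centred finite-variance aperiodic lattice walks of Vatutin--Wachtel / Eppel type to get $\p(\tau=n)\sim c'n^{-3/2}$. An alternative that bypasses the local ladder asymptotic altogether is Abel summation: writing $\p(Z_\tau=x)=\sum_{m\ge1}\p(\tau>m)\bigl(p_{m+1}(x)-p_m(x)\bigr)$ and feeding in only the robustly available tail $\p(\tau>m)\sim c_\tau m^{-1/2}$ together with heat-kernel difference estimates for $p_{m+1}(x)-p_m(x)$, but the control of that difference is itself delicate.

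With the local estimate in hand the remainder is a verbatim repetition of the proof of Theorem \ref{THM_main}, specialised to $\rho=\tfrac12$ and $\ell\equiv\ell_\infty$. I split $\p(Z_\tau=x)=I_1(x)+I_{21}(x)+I_{22}(x)$ exactly as before; with a constant slowly varying function every Potter-bound step is trivial (all ratios $\ell(\cdot)/\ell(\cdot)\to 1$), the dominated-convergence evaluation of the integral in \eqref{Int111} returns $\Gamma(\rho+\tfrac12)=\Gamma(1)=1$, the estimate for $I_{22}$ is unchanged, and the Gaussian upper bound of Alexopoulos again kills $I_1$ through the exponential factor $e^{-c|x|^{1/3}/2}$. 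This yields $\p(Z_\tau=x)\sim C(\tfrac12)\,|x|^{-2}/\ell_\infty$, that is \eqref{second_mom}.

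Finally, for the moment assertion I integrate the tail. From $\p(Z_\tau=y)\sim C\,y^{-2}$ one gets $\p(Z_\tau>y)=\sum_{k>y}\p(Z_\tau=k)\sim C\,y^{-1}$, so that $\e(Z_\tau^{+})=\sum_{y\ge 0}\p(Z_\tau>y)$ diverges and hence $\e(|Z_\tau|)=\infty$, as claimed.
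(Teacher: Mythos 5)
Your proposal is correct, and its core step coincides with the paper's: the paper's proof of Proposition \ref{REmark} consists precisely of replacing \eqref{ali: Vatutin Wachtel result} by the local ladder asymptotic $\p(\tau=n)\sim c\,n^{-3/2}$, taken from \'{E}ppel' \cite{ep79}, and rerunning the proof of Theorem \ref{THM_main} at $\rho=\tfrac12$ with $\ell$ asymptotically constant --- so the ``Eppel-type'' theorem you invoke for the lattice case is exactly the paper's citation, and what you flag as the main obstacle is not a gap but the intended ingredient. You genuinely diverge from the paper in two places, both improvements in detail or economy. First, for the symmetric-with-density case (the only reading under which that extra sentence of the statement is not redundant, since it must allow infinite variance, where \'{E}ppel' does not apply) the paper gives no argument at all, whereas you supply a clean self-contained one: $\p(S_k\le 0)=\tfrac12$ exactly, Sparre--Andersen yields $\sum_{n\ge0}\p(\tau>n)s^n=(1-s)^{-1/2}$, hence the exact formula $\p(\tau>n)=\binom{2n}{n}4^{-n}$, which can be differenced \emph{exactly} (no Tauberian or monotonicity issue) to give $\p(\tau=n)\sim\tfrac12\pi^{-1/2}n^{-3/2}$; this correctly fills in what the paper leaves implicit. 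Second, for $\e(|Z_{\tau}|)=\infty$ the paper takes a roundabout route: from $1-F(n)\sim C_1/n$ it places $Z_\tau$ in the domain of attraction of the Cauchy law, identifies the norming sequence $b_n\sim C_3 n$ via \cite{gnko69}, and then applies Tucker's criterion \cite{tu75} that $\int|x|\,\mathrm{d}F(x)<\infty$ if and only if $\sum_n n^{-2}b_n<\infty$. Your direct tail summation, $\p(Z_\tau>y)\sim C/y$ and hence $\e(Z_\tau^{+})=\sum_{y\ge0}\p(Z_\tau>y)=\infty$ because $Z_\tau^{+}$ is integer-valued, is shorter, avoids the symmetry considerations the paper's route leans on, and is fully rigorous. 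Your preliminary observations --- that oscillation plus $\e(Y^2)<\infty$ forces $\e Y=0$, so $Y\in D(2,0)$ and $\rho=\tfrac12$, and that the Spitzer--Rogozin tail $\p(\tau>n)\sim c_\tau n^{-1/2}$ pins $\ell$ down to the constant $(\pi c_\tau)^{-1}$ in \eqref{Tau_asympt} --- are also correct and worth making explicit, as the paper states neither.
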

\begin{proof}
The proof is similar to that of Theorem \ref{THM_main}, but in place of formula \eqref{ali: Vatutin Wachtel result} one uses the result by \textsc{\'{E}ppel'} \cite{ep79}, 
\begin{align*}
 \p(\tau=n)\sim cn^{-3/2}, \quad c>0, \ n\rightarrow \infty.
 \end{align*}
We prove that $\e(|Z_{\tau}|)=\infty$. We set $F(x)=\p(Z_{\tau}\leq x)$ and by \eqref{second_mom} we get that, for some $C_1>0$,
\begin{align}
1-F(n)\sim  \frac{C_1}{n},\quad  n\rightarrow \infty , \label{ali: behavior of 1-F(n)}
\end{align}
which means that $1-F(n)$ is regularly varying at infinity of index $-1$.
In view of symmetry this implies that $Z_{\tau}$ is in the domain of attraction of the Cauchy law. Therefore, if $\mathcal{C}$ is the distribution function of the Cauchy law, then there are sequences $b_n>0$ and $a_n>0$ such that $F^{\ast n}(b_nx+b_na_n)\to \mathcal{C}(x)$, for all $x$ as $n$ goes to infinity. We find the asymptotic behaviour of the normalizing sequence $(b_n)$.
It is known from \cite{gnko69} that $(b_n)$ satisfies $1-F(b_n)\sim \frac{C_2}{n}$ at infinity
and, by \eqref{ali: behavior of 1-F(n)}, we obtain that $b_n\sim C_3 n$ at infinity.
Finally, by \textsc{Tucker} \cite{tu75}, the integral $\int |x| \mathrm{d}F(x)$ is finite if and only if $\sum_{n\geq 1} n^{-2} b_n <\infty$ and the proof is finished.
\end{proof}

\section{Multidimensional Lindley process}\label{Sec_MLP}
We proceed to study recurrence of the LP in higher dimensions. We start by discussing the two-dimensional case 
for which we apply various probabilistic methods including arguments from renewal theory as well as Theorem \ref{THM_main} from Section \ref{Sec_Walks}.
In the last subsection we give a result about positive recurrence of the LP. To prove it we use a technique of local contractivity which has its roots in stochastic dynamical systems.

\subsection*{Two-dimensional LP}\hspace*{\fill} \\

Let $(W_n^i)$, $i=1,2$, be two Lindley processes as defined in \eqref{UniVarDef} with the underlying random walks $(S_n^i)$ with increments $Y^i$ which have distributions $\mu^i$ supported in $\mathbb{Z}$. We consider a process $(W_n^1, W_n^2)$ in the lattice quadrant $\mathbb{N}_0 \times \mathbb{N}_0$ and we assume that $\p \left (Y^1>0, Y^2>0 \right) >0$. Then the probability to reach $(0,0)$ from an arbitrary state after finitely many steps is positive. Thus, the origin and all the states that can be reached from it build a unique essential class. 
Without our assumption it may happen that some states will never be reached by the process even when $\mathrm{gcd} (\mathrm{supp}\, \mu^i)=1$, see the following example. We also emphasize that a precise description of essential classes in a general case is a very hard task.

\begin{example}\label{exa11}
Set $\mu=\frac{1}{4}\left(\delta_{ (-4,1)}+\delta_{(-3,2)}+\delta_{(1,-4)}+\delta_{ (2,-3)}\right)$. Then $\mathrm{gcd }( \mathrm{supp}\, \mu^i)=1$ and clearly the two coordinates of $(W_n^1,W_n^2)$ are transient and whence also the two dimensional process is transient. In this case every point in $\mathbb{N}_0\times \mathbb{N}_0$ will be visited at most one time a.s.
On the other hand,
setting $\mu=\frac{1}{4}\left(\delta_{ (-1,1)}+\delta_{(-1,2)}+\delta_{(1,-1)}+\delta_{ (2,-1)}\right) $ we also have $\mathrm{gcd }( \mathrm{supp}\, \mu^i)=1$ with positive recurrent coordinates and the two-dimensional LP $(W_n^1,W_n^2)$ will never reach $(0,0)$ in this case. We notice however that $(W_n^1,W_n^2)$ is positive recurrent in its essential class, cf. Theorem \ref{the: positive recurrence of multidimensional lp}.
\end{example}

We begin our discussion on recurrence with a very simple but fruitful lemma. 

\begin{lemma}\label{lem: W is recurrent if tilde W is recurrent}
Let $\bar{\tau}^1(n)$ be the $n$-th non-strict ascending ladder epoch of $(S_n^1)$. Assume that 
the first coordinate process $(W^1_n)$ and the projected process $(0,W^2_{\bar{\tau}^1(n)})$ are recurrent then the two-dimensional process $(W_n^1, W_n^2)$ is recurrent. 
If $W_n^1$ and $W^2_{\bar{\tau}^1(n)}$ are positive recurrent then $(W_n^1, W_n^2)$ is positive recurrent.
\end{lemma}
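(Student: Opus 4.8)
The plan is to reduce the recurrence of the planar chain $(W^1_n,W^2_n)$ to the two hypothesised processes by exploiting that the first coordinate vanishes \emph{exactly} at the ladder epochs of $(S^1_n)$. Since $(W^1_n)$ started at $0$ returns to $0$ precisely at the times $T_{W^1}(k)=\bar{\tau}^1(k)$, one has $W^1_n=0$ if and only if $n\in\{\bar{\tau}^1(k):k\geq 0\}$. Consequently $(W^1_n,W^2_n)=(0,0)$ if and only if $n=\bar{\tau}^1(k)$ for some $k$ and simultaneously $W^2_{\bar{\tau}^1(k)}=0$. By the discussion preceding the lemma the essential class of $(W^1_n,W^2_n)$ contains $(0,0)$ and is irreducible, so it suffices to prove that $(0,0)$ is visited infinitely often (for recurrence) and has finite expected return time (for positive recurrence).

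For the recurrence statement I would first use that recurrence of $(W^1_n)$ is equivalent to $\PP(\bar{\tau}^1(1)<\infty)=1$, so that almost surely all epochs $\bar{\tau}^1(k)$ are finite and strictly increasing; this is exactly what makes the subsampled process $(0,W^2_{\bar{\tau}^1(n)})$ well-defined. Recurrence of this projected process means that $W^2_{\bar{\tau}^1(k)}=0$ for infinitely many $k$ almost surely. Combining the two observations, there are infinitely many distinct finite times $\bar{\tau}^1(k)$ at which both coordinates vanish, hence $(0,0)$ is visited infinitely often and the chain is recurrent.

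For positive recurrence I would express the first return time of $(W^1_n,W^2_n)$ to $(0,0)$ as $\bar{\tau}^1(N)$, where $N=\inf\{k\geq 1:W^2_{\bar{\tau}^1(k)}=0\}$ is the first return time of the subsampled chain to $0$. Writing $\bar{\tau}^1(N)=\sum_{j=1}^{N}X_j$ with $X_j=\bar{\tau}^1(j)-\bar{\tau}^1(j-1)$, the increments $X_j$ are i.i.d. copies of $\bar{\tau}^1$ by the regenerative structure of the ladder epochs of $(S^1_n)$. Positive recurrence of $(W^1_n)$ gives $\e\,\bar{\tau}^1<\infty$, while positive recurrence of the projected chain gives $\e N<\infty$. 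Wald's identity then yields $\e\,\bar{\tau}^1(N)=\e N\cdot\e\,\bar{\tau}^1<\infty$, which is finiteness of the expected return time to $(0,0)$ and hence positive recurrence.

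The main obstacle is the justification of Wald's identity, since $N$ is \emph{not} a stopping time for the ladder increments $(X_j)$ alone: it depends on the sampled values $W^2_{\bar{\tau}^1(j)}$, which in turn depend on the path of $S^2$. The point to verify carefully is that, because $(S^1_n)$ and $(S^2_n)$ are independent and the ladder epochs are defined from $S^1$ only, the increment $X_{k+1}$ is independent of the $\sigma$-field $\mathcal{F}_k=\sigma\big(X_1,\dots,X_k;\,S^2_m,\ m\leq\bar{\tau}^1(k)\big)$, with respect to which $N$ is a stopping time, as $\{N\leq k\}$ is determined by $W^2_{\bar{\tau}^1(1)},\dots,W^2_{\bar{\tau}^1(k)}$. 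This adaptedness, together with $\e N<\infty$ and $\e\,\bar{\tau}^1<\infty$, is precisely what licenses the Wald computation.
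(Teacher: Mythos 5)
Your argument is, in its mechanics, the same as the paper's: the identity $T=\bar{\tau}^1(\widetilde{T})$ between the first return time of $(W^1_n,W^2_n)$ to $(0,0)$ and the first return time $\widetilde{T}$ of the subsampled chain (resting on the fact that $(W^1_n,W^2_n)=(0,0)$ forces $n\in\{\bar{\tau}^1(k):k\geq 0\}$), and then Wald's identity $\e\,\bar{\tau}^1(\widetilde{T})=\e\,\bar{\tau}^1(1)\,\e\,\widetilde{T}<\infty$ applied via an enlarged filtration with respect to which $\widetilde{T}$ is a stopping time while the ladder increments remain independent of the past. You also correctly spotted that this independence is the one delicate point, which is exactly where the paper spends its effort.

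However, your justification of that point has a genuine gap in generality: you invoke independence of $(S^1_n)$ and $(S^2_n)$, which is \emph{not} a hypothesis of the lemma. In the setup of this section the driving noise is a single i.i.d.\ sequence of \emph{vectors} $(Y^1_n,Y^2_n)$ whose coordinates may be dependent --- see the standing assumption $\p(Y^1>0,\,Y^2>0)>0$ and Example \ref{exa11}, where $\mu$ is a genuinely joint law on $\mathbb{Z}^2$. (Independence is only added later, as an extra hypothesis, in Proposition \ref{the: two-dimensional queuing process recurrent}.) Your sentence ``because $(S^1_n)$ and $(S^2_n)$ are independent and the ladder epochs are defined from $S^1$ only'' therefore proves only a special case. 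The correct justification, which is the paper's, needs no cross-independence: take $\mathcal{F}_k=\sigma\bigl(\bar{\tau}^1(1),\dots,\bar{\tau}^1(k),(Y^1_1,Y^2_1),\dots,(Y^1_{\bar{\tau}^1(k)},Y^2_{\bar{\tau}^1(k)})\bigr)$; since $\bar{\tau}^1(k)$ is a stopping time for the i.i.d.\ vector sequence, the post-$\bar{\tau}^1(k)$ vectors regenerate and are independent of $\mathcal{F}_k$, and $\xi_{k+1}=\inf\{m\geq 1: Y^1_{\bar{\tau}^1(k)+1}+\dots+Y^1_{\bar{\tau}^1(k)+m}\geq 0\}$ is a function of those post-$\bar{\tau}^1(k)$ increments alone, hence independent of $\mathcal{F}_k$; meanwhile $\{\widetilde{T}\leq k\}\in\mathcal{F}_k$ because $W^2_{\bar{\tau}^1(1)},\dots,W^2_{\bar{\tau}^1(k)}$ are $\mathcal{F}_k$-measurable. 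With this one repair your proof coincides with the paper's in full generality. (A cosmetic point: the lemma concerns the \emph{non-strict} ladder epochs, so the increment is defined with ``$\geq 0$'', not ``$>0$'' as in your strict-ladder formula.)
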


\begin{proof}
Let $T$ and $\widetilde{T}$ be the first return times to the point $(0,0)$ of $(W_n^1, W_n^2)$ and $(0,W^2_{\bar{\tau}^1(n)})$ respectively. By the assumption, $\widetilde{T}$ is a.s. finite. We claim that $T = \bar{\tau}^1(\widetilde{T})$. Indeed, we have
\begin{align*}
T&=\inf\{ n\geq 1: (W_n^1,W_n^2)=(0,0)\} 
= \inf\{ \bar{\tau}^1(n)\geq 1: (W_{\bar{\tau}^1(n)}^1,W_{\bar{\tau}^1(n)}^2)=(0,0)\}\\
&= \bar{\tau}^1\left( \inf\{ n\geq 1: (0,W^2_{\bar{\tau}^1(n)})=(0,0)\}\right)
= \bar{\tau}^1(\widetilde{T}),
\end{align*}
where we used the fact that $(W_n^1,W_n^2)$ attains the value $(0,0)$ only if $n\in \{\bar{\tau}^1(k): k\geq 0\}$. This in turn implies that $T$ is a.s. finite and we get the first part of the result.

For the positive recurrent case, we consider a random walk $\bar{\tau}^1(n)= \xi_1 +\ldots +\xi_{n}$ with independent increments $\xi_i= \bar{\tau}^1(i)-\bar{\tau}^1(i-1)$ which have the same law as $\bar{\tau}^1(1)$.
We build a new filtration $\{\mathcal{F}_n\}_{n\geq 1}$ given by
\begin{align*}
\mathcal{F}_n=\sigma \left( \bar{\tau}^1(1),\dotsc,\bar{\tau}^1(n), \left(Y^1_1,Y_1^2\right) ,\ldots, \left( Y^1_{\bar{\tau}^1(n)}, Y^2_{\bar{\tau}^1(n)}\right) \right)
\end{align*}
and notice that, since $\{\widetilde{T}\leq n\}\in \mathcal{F}_n$, $\widetilde{T}$ is a stopping time with respect to the filtration $\{\mathcal{F}_n\}_{n\geq 1}$. Moreover, the increments of the random walk $(\bar{\tau}^1(n))$ have the form
\begin{align*}
\xi_i= \inf_{k\geq 0} \left\{ Y^1_{\bar{\tau}^1(i-1)+1}+\ldots + Y^1_{\bar{\tau}^1(i-1)+k} >0 \right\}
\end{align*}
and therefore $\xi_n$ is independent of $\mathcal{F}_{n-1}$. This allows us to apply Wald's identity in the form $\e\bar{\tau}^1(\widetilde{T})=\e\bar{\tau}^1(1) \e \widetilde{T}<\infty$ which implies positive recurrence 
of $(W_n^1,W_n^2)$.
\end{proof}

In the next proposition we apply Lemma \ref{lem: W is recurrent if tilde W is recurrent} and combine it with an argument which comes from renewal theory.

\begin{proposition}\label{the: two-dimensional queuing process recurrent}
Suppose that the random walks $(S_n^i)$, $i=1,2$, are independent and oscillating with increments $Y^i\in D(\alpha, \beta)$ satisfying $\rho_1+\rho_2>1$, where $\rho _i$ are defined in \eqref{ali: definition of rho}. 
Then the process $(W^1_{n}, W^2_{n})$ is null recurrent. 
\end{proposition}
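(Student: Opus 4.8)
The plan is to apply Lemma \ref{lem: W is recurrent if tilde W is recurrent}. Since $(S^1_n)$ is oscillating, the one-dimensional criterion recalled in the introduction shows that $(W^1_n)$ is null recurrent, so the first hypothesis of the lemma holds automatically, and it remains to prove that the projected process $(0,W^2_{\bar\tau^1(n)})$ is recurrent. Writing $R^i=\{\bar\tau^i(k):k\ge0\}$ for the set of non-strict ascending ladder epochs of $(S^i_n)$, and recalling that $W^2_m=0$ exactly when $m\in R^2$, the projected process is recurrent precisely when $W^2_{\bar\tau^1(n)}=0$ for infinitely many $n$, i.e.\ when the random set $R^1\cap R^2$ is infinite almost surely. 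As $(S^1_n)$ and $(S^2_n)$ are independent, $R^1$ and $R^2$ are independent renewal sets, and the whole problem reduces to showing that two independent, heavy-tailed renewal processes on $\mathbb{N}_0$ have infinitely many common points.

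The core step is to determine the renewal mass $u^i_n=\p(n\in R^i)$, and here I would use Sparre Andersen duality. Reversing time in $(S^i_0,\dots,S^i_n)$ shows that the event $n\in R^i$, i.e.\ $S^i_n\ge S^i_j$ for all $0\le j<n$, has the same probability as the event that the reversed walk stays non-negative, whence
\begin{align*}
u^i_n=\p\bigl(S^i_k\ge0\ \text{for all }1\le k\le n\bigr)=\p(\tau^{i,-}>n),
\end{align*}
where $\tau^{i,-}=\inf\{k\ge1:S^i_k<0\}$ is the first strict descending ladder epoch of $(S^i_n)$. Now $\tau^{i,-}$ is the first strict ascending ladder epoch of the oscillating walk $-S^i$, whose increment lies in $D(\alpha,-\beta)$ with associated parameter $1-\rho_i$; indeed, by \eqref{ali: definition of rho} the reflection $\beta\mapsto-\beta$ sends $\rho_i\mapsto1-\rho_i$, and $\mathcal A$ is invariant under this reflection. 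Applying \eqref{Tau_asympt} to $-S^i$ therefore produces a slowly varying $\ell_i$ and a constant $C_i>0$ with
\begin{align*}
u^i_n=\p(\tau^{i,-}>n)\sim\frac{C_i}{n^{\,1-\rho_i}\ell_i(n)},\qquad n\to\infty .
\end{align*}

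With these asymptotics in hand, independence gives the expected number of common renewal points as
\begin{align*}
\sum_{n\ge1}\p(n\in R^1)\,\p(n\in R^2)=\sum_{n\ge1}u^1_n u^2_n\sim C\sum_{n\ge1}\frac{n^{\rho_1+\rho_2-2}}{\ell_1(n)\ell_2(n)}.
\end{align*}
The summand is regularly varying of index $\rho_1+\rho_2-2>-1$, so by Karamata's theorem the series diverges, and it is precisely the hypothesis $\rho_1+\rho_2>1$ that forces this. To upgrade divergence to an almost sure statement I would note that $R^1\cap R^2$ is itself a (possibly terminating) renewal process: one has $0\in R^1\cap R^2$, and since each $R^i$ regenerates at its own points, the successive gaps between common points are i.i.d.\ and the renewal mass of $R^1\cap R^2$ equals $u^1_nu^2_n$. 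The standard dichotomy for renewal processes then shows that $\sum_n u^1_n u^2_n=\infty$ forces $R^1\cap R^2$ to be infinite almost surely. Hence the projected process is recurrent, and Lemma \ref{lem: W is recurrent if tilde W is recurrent} yields recurrence of $(W^1_n,W^2_n)$. Recurrence is null rather than positive because the return time satisfies $T=\bar\tau^1(\widetilde T)\ge\bar\tau^1(1)$ with $\widetilde T\ge1$, and $\e\,\bar\tau^1(1)=\infty$ in the oscillating case, so $\e T=\infty$.

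The step I expect to be the main obstacle is the renewal-mass computation: passing from the ladder-epoch tail $\p(\bar\tau^i>n)\sim n^{-\rho_i}/\ell$ to the renewal density $u^i_n\sim n^{\rho_i-1}/\ell$ without invoking the delicate strong renewal theorem, whose hypotheses can fail below the threshold $\rho=1/2$. Combining Sparre Andersen duality with Rogozin's asymptotics \eqref{Tau_asympt} applied to the reflected walk is what makes this clean and sidesteps those difficulties; the one place needing care is verifying that the descending-ladder parameter is indeed $1-\rho_i$ and correctly tracking the slowly varying factors.
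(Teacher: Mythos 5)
Your proposal is correct, and it arrives at exactly the quantity the paper studies --- the expected number of epochs at which both coordinates vanish, $\sum_n u^1_n u^2_n$ --- but it evaluates the renewal masses by a genuinely different key step. The paper considers the Markov chain $(0,W^2_{\tau^1(n)})$, writes its Green function at the origin as $\sum_k u^1_k u^2_k$ with $u^i_k=\sum_{n=0}^k \p(\tau^i(n)=k)$, and then extracts the lower bound $u^i_k\geq (C-\epsilon)k^{\rho_i-1-\nu}$ from the Garsia--Lamperti renewal theorem, which in general (and in particular for $\rho_i\leq 1/2$) only yields a $\liminf$; this suffices since only divergence of the series is needed. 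You instead compute $u^i_n$ exactly via Sparre Andersen duality, $u^i_n=\p(\tau^{i,-}>n)$, and apply Rogozin's equivalence \eqref{Tau_asympt} to the reflected walk $-S^i$, whose Spitzer constant is $1-\rho_i$, getting the precise asymptotics $u^i_n\sim C_i n^{\rho_i-1}/\ell_i(n)$ with no strong-renewal-theorem input at all --- a cleaner and in fact stronger estimate than the paper's $\liminf$; your concluding dichotomy for the intersection renewal set $R^1\cap R^2$ is the probabilistic counterpart of the paper's Green-function criterion, and your null-recurrence argument ($T=\bar\tau^1(\widetilde T)\geq\bar\tau^1(1)$ with $\e\,\bar\tau^1(1)=\infty$ for an oscillating walk) matches the paper's in substance. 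Two small points would make the write-up airtight: (i) passing from $\p(S^i_n>0)\to\rho_i$ to $\p(-S^i_n>0)\to 1-\rho_i$ uses $\p(S^i_n=0)\to 0$, which holds here (e.g., $\sup_x\p(S^i_n=x)\to 0$ for an aperiodic lattice walk in a stable domain of attraction) but deserves a sentence; (ii) it is worth noting explicitly that your use of the non-strict ascending ladder set, the one matching $T_W(k)=\bar\tau(k)$, dualizes to the \emph{strict} descending epoch $\tau^{i,-}=\inf\{k\geq1: S^i_k<0\}$, exactly as you wrote --- the paper works with strict ascending epochs instead, a harmless discrepancy since the two renewal masses differ only by constants in the slowly varying factor.
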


\begin{proof} 
Since $(S_n^1)$ is oscillating, $(W_n^1)$ is null recurrent.
Let $\tau^1(n)$ denote the $n$-th strict ladder epoch of $(S_n^1)$. We show that the Green function $G(0,0)$ of the process $(0,W^2_{\tau^1(n)})$ is infinite and whence it is a recurrent Markov chain. Evidently, this implies recurrence of the process $(0,W^2_{\bar{\tau}^1(n)})$ which in view of Lemma \ref{lem: W is recurrent if tilde W is recurrent} forces that $(W_n^1, W_n^2)$ is recurrent. Moreover, since $(W_n^1)$ is null recurrent, the two-dimensional process is also null recurrent as claimed.

An independence-based argument allows us to compute
\begin{align*}
G(0,0)&=\sum_{n=0}^{\infty} \p(W_{\tau^1(n)}^2=0)
=\sum_{n=0}^{\infty} \sum_{k=0}^{\infty} \p(W_k^2=0) \p(\tau^1(n)=k) \\
&= \sum_{k=0}^{\infty} \p(W_k^2=0)\sum_{n=0}^{\infty} \p(\tau^1(n)=k)
 = \sum_{k=0}^{\infty} \sum_{m=0}^k\p(\tau^2(m)=k)\sum_{n=0}^{k} \p(\tau^1(n)=k)\\
&= \sum_{k=0}^{\infty} u_k^1 u_k^2 ,\quad \mathrm{where }\ u_k^i = \sum_{n=0}^k \p(\tau^i(n)=k).
\end{align*}
The sequence $(u_k^1)$ is a renewal sequence, that is it satisfies the recursive equation
\begin{align*}
u^1_0=1,\quad  u^1_k&=\sum_{n=1}^k \p( \tau^1(1)=n) u^1_{k-n}.
\end{align*}
Since $\PP (S^1_1>0)>0$, we have $\mathrm{gcd\{k:\p(\tau^1(1)=k)>0 \}}=1$. Moreover, our assumption implies that \eqref{Tau_asympt} holds with some slowly varying function $\ell$. 
Therefore, applying the celebrated renewal theorem by \textsc{Garsia and Lamperti} \cite[Theorem 1.1]{gala62} we obtain that
\begin{align*}
\liminf_{k\rightarrow\infty}  \frac{u_k^1}{k^{\rho_1 -1}\ell (k)} =  \pi^{-1} \Gamma\left(\rho_1\right)\Gamma(1-\rho_1)\sin (\rho_1 \pi)  =C.
\end{align*}
Thus, for some $\epsilon>0$, $k_0>1$, and for all $k\geq k_0$, we have
\begin{align*}
u^1_k \geq (C-\epsilon) k^{\rho_1 -1} \ell (k)\geq (C-\epsilon )k^{\rho_1 -1 -\nu},\ \  \text{for any }\ \nu >0.
\end{align*}
Clearly, all the same holds for the sequence $(u^2_k)$ and whence for $C_1>0$ we have
$G(0,0)\geq  C_1 \sum_{k>k_0} k^{-2\nu + \rho_1+\rho_2 -2}$. Choosing $\nu$ such that $0<2\nu \leq \rho_1 +\rho_2 -1$ we conclude the claim.
\end{proof}

We also present a positive result in the case when $\rho_1=\rho_2=1/2$.
\begin{proposition} \label{pr: two-dimensional queuing process recurrent with rho=1/2 }
If the random walks $(S_n^i)$, $i=1,2$, are independent, centered and with finite second moment then  $(W^1_{n}, W^2_{n})$ is null recurrent. 
\end{proposition}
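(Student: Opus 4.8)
The plan is to follow verbatim the strategy of Proposition \ref{the: two-dimensional queuing process recurrent}: reduce to the subordinated process via Lemma \ref{lem: W is recurrent if tilde W is recurrent}, compute its Green function at the origin, and show that it diverges. First I would note that centering together with finite second moment places each increment $Y^i$ in the domain of attraction of the normal law, so that $\rho_1=\rho_2=1/2$, and that each $(S_n^i)$ oscillates, whence each coordinate $(W_n^i)$ is null recurrent. The decisive structural fact in this regime is that the \'{E}ppel' asymptotics $\p(\tau^i=n)\sim c_i n^{-3/2}$ (already used in Proposition \ref{REmark}) give $\p(\tau^i>n)\sim c_i' n^{-1/2}$, so that \eqref{Tau_asympt} holds with $\rho_i=1/2$ and with a slowly varying function $\ell_i$ that converges to a strictly positive constant.

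Next I would reproduce the independence-based computation from Proposition \ref{the: two-dimensional queuing process recurrent} for the process $(0,W^2_{\tau^1(n)})$, obtaining the Green function
\begin{align*}
G(0,0)=\sum_{k=0}^{\infty} u_k^1 u_k^2,\qquad u_k^i=\sum_{n=0}^k \p(\tau^i(n)=k),
\end{align*}
where $(u_k^i)$ is the renewal sequence associated with the strict ladder epochs $\tau^i$. Since $\p(S_1^i>0)>0$, the renewal sequence is aperiodic, and \eqref{Tau_asympt} holds with $\rho_i=1/2$, so the Garsia and Lamperti renewal theorem \cite[Theorem 1.1]{gala62}, applied exactly as before, yields
\begin{align*}
\liminf_{k\rightarrow\infty}\frac{u_k^i}{k^{-1/2}\ell_i(k)}=\pi^{-1}\Gamma(1/2)\Gamma(1/2)\sin(\pi/2)=1.
\end{align*}

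Because $\ell_i$ is bounded below by a positive constant for large $k$, the liminf bound supplies a genuine constant $c_i>0$ with $u_k^i\geq c_i k^{-1/2}$ for all sufficiently large $k$; here, in contrast to Proposition \ref{the: two-dimensional queuing process recurrent}, one must \emph{not} absorb $\ell_i$ into an extra polynomial factor $k^{-\nu}$, as that would destroy the critical exponent. It then follows that
\begin{align*}
G(0,0)\geq c_1 c_2 \sum_{k\geq k_0} k^{-1}=\infty,
\end{align*}
so $(0,W^2_{\tau^1(n)})$ is recurrent; this passes to the non-strict subordinated process and, through Lemma \ref{lem: W is recurrent if tilde W is recurrent}, to $(W_n^1,W_n^2)$. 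For null recurrence I would invoke that the first coordinate $(W_n^1)$ is null recurrent, so that its return time $\bar\tau^1(1)$ to $0$ has infinite mean; since the return time of the joint process to $(0,0)$ satisfies $T=\bar\tau^1(\widetilde T)\geq \bar\tau^1(1)$, we get $\e T=\infty$, hence $(W_n^1,W_n^2)$ is null recurrent.

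The main obstacle is precisely the boundary nature of the exponent $\rho_1+\rho_2=1$: the final series $\sum_k u_k^1 u_k^2$ lands exactly on the harmonic series, so any slowly varying decay in the factors $u_k^i$ could, a priori, force convergence and thus transience. What rescues recurrence is that finite variance compels the slowly varying functions $\ell_i$ to converge to positive constants, which is why this critical case genuinely requires a separate treatment from the case $\rho_1+\rho_2>1$, where a slowly varying factor can be discarded without harm.
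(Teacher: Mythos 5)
Your proposal is correct and follows essentially the same route as the paper, whose proof is stated in one line: repeat the Green-function computation of Proposition \ref{the: two-dimensional queuing process recurrent} with the \'{E}ppel' asymptotics $\p(\tau^i(1)=n)\sim K n^{-3/2}$ in place of \eqref{Tau_asympt}, conclude $G(0,0)=\infty$, and inherit null recurrence from the first coordinate. Your write-up in fact supplies the detail the paper leaves implicit -- that at the critical exponent $\rho_1+\rho_2=1$ one must retain the (here asymptotically constant) slowly varying factors rather than sacrificing a power $k^{-\nu}$ as in the supercritical case -- and your Garsia--Lamperti constant $\pi^{-1}\Gamma(1/2)^2\sin(\pi/2)=1$ and the Wald/monotonicity argument $\e T\geq \e\bar\tau^1(1)=\infty$ are both sound.
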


\begin{proof}
The proof is similar as that of Proposition \ref{the: two-dimensional queuing process recurrent} but instead of asymptotics \eqref{Tau_asympt} we use
the result by \textsc{\'{E}ppel'} \cite[Theorem 1]{ep79}, that is 
\begin{align*}
\p(\tau^1(1)=n)&\sim K n^{-\frac{3}{2}},\quad \text{for }K>0,\ \text{as } n\rightarrow \infty . 
\end{align*}
This allows us to show that $G(0,0)$ is infinite and we again get the result.
\end{proof}

Our next result concerns recurrence of the two-dimensional process $(W_n,Z_n)$, where in the first coordinate $(W_n)$ is a LP with the underlying random walk $S_n=Y_1+\ldots +Y_n$ and the second coordinate $(Z_n)$ is a random walk on $\mathbb{Z}$ with increments $V_1,V_2,\ldots$. 

\begin{theorem}\label{the: recurrence of (W_n,Z_n)}
The two-dimensional process $(W_{n}, Z_{n})$ is recurrent in each of the following cases.\\
1. If $(W_{n})$ is positive recurrent and $(Z_n)$ is a centered random walk.\\
2. If $(S_n)$ is oscillating with the increment $Y\in D(\alpha,\beta)$ such that 
	$1/2< \rho <1$, and $(Z_n)$ is a symmetric finite range random walk independent of $(S_n)$.\\ 
3. If $(W_{n})$ is null recurrent with $\e(|Y|^2)<\infty$ and independent of 
	$(Z_n)$ which we assume to be a symmetric random walk of finite range.
\end{theorem}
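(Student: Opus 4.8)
The plan is to reduce all three cases to a single statement: the subordinated random walk $(Z_{\bar\tau(n)})$, where $\bar\tau$ denotes the non-strict ascending ladder epochs of $(S_n)$, is recurrent on $\mathbb{Z}$. Indeed, the proof of Lemma \ref{lem: W is recurrent if tilde W is recurrent} never uses that the second coordinate is itself a Lindley process; it only uses that $(W_n,Z_n)$ can equal $(0,0)$ solely at the instants $n\in\{\bar\tau(k):k\ge0\}$, which is precisely where the first coordinate $(W_n)$ returns to the origin. Hence the same computation $T=\bar\tau(\widetilde T)$ shows that if $(W_n)$ is recurrent and the projected chain $(0,Z_{\bar\tau(n)})$ returns to $(0,0)$ infinitely often, then $(W_n,Z_n)$ is recurrent. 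In all three cases $(W_n)$ is recurrent --- positive recurrent in case~1 and null recurrent (oscillating, respectively centred with finite variance) in cases~2 and~3 --- so it remains to prove recurrence of $(Z_{\bar\tau(n)})$.

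For case~1, positive recurrence of $(W_n)$ is equivalent to $(S_n)$ having positive drift, whence $\e\bar\tau(1)<\infty$. First I would record that $(Z_{\bar\tau(n)})$ is a random walk on $\mathbb{Z}$ with increments $\sum_{j=\bar\tau(k)+1}^{\bar\tau(k+1)}V_j$; since the $V_j$ are i.i.d. and the ladder gaps are i.i.d., these increments are i.i.d. as well. Because $(Z_n)$ is centred and $\bar\tau(1)$ has finite mean, Wald's identity gives that each increment has mean $\e\bar\tau(1)\cdot\e V=0$. A centred, genuinely one-dimensional random walk on $\mathbb{Z}$ is recurrent by the Chung--Fuchs criterion \cite{chfu51}, and the reduction above yields recurrence of $(W_n,Z_n)$.

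For cases~2 and~3 I would invoke the fluctuation results of Section \ref{Sec_Walks}. Since those are phrased for the first strict ascending ladder epoch $\tau$, I would first establish recurrence of $(Z_{\tau(n)})$ and then pass to the non-strict epochs exactly as in the proof of Proposition \ref{the: two-dimensional queuing process recurrent}: the strict epochs form a subsequence of the non-strict ones, so if $(Z_{\tau(n)})$ hits $0$ infinitely often then so does $(Z_{\bar\tau(n)})$. In case~2, where $1/2<\rho<1$, Proposition \ref{pro: for which rho is Z_tau recurrent/tranisent} states precisely that $(Z_{\tau(n)})$ is recurrent. In case~3 the hypotheses $\e(|Y|^2)<\infty$ and null recurrence force $\e Y=0$ and hence $\rho=1/2$; here Proposition \ref{REmark} yields $y^2\,\p(Z_{\tau}=y)\to C$ and places $Z_{\tau}$ in the domain of attraction of the symmetric Cauchy law. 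A symmetric walk with $1-\phi(t)\sim c|t|$ as $t\to0$ sits exactly on the recurrence boundary, and the integral $\int_0^{\varepsilon}(1-\phi(t))^{-1}\,\mathrm dt$ diverges like $\int_0^{\varepsilon}(ct)^{-1}\,\mathrm dt$, so recurrence again follows from the Chung--Fuchs test.

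The hardest part will be the careful passage from the strict ladder epochs of Section \ref{Sec_Walks} to the non-strict epochs governing the returns of the Lindley process, together with the borderline Chung--Fuchs analysis in the critical case~3, where no slowly varying correction is available to tip the balance. As a robustness check I would also verify the conclusion through the Green function identity $G\big((0,0),(0,0)\big)=\sum_{n\ge0}\p(W_n=0)\,\p(Z_n=0)=\sum_{n\ge0}u_n\,\p(Z_n=0)$, valid when $(S_n)$ and $(Z_n)$ are independent, where $u_n=\p(W_n=0)$ is the ladder renewal sequence. In case~1 one has $u_n\to(\e\bar\tau(1))^{-1}>0$ while $\sum_n\p(Z_n=0)=\infty$ by recurrence of the centred walk; in case~2 the Garsia--Lamperti theorem \cite{gala62} gives $u_n\asymp n^{\rho-1}\ell(n)$ and the local limit theorem gives $\p(Z_n=0)\asymp n^{-1/2}$, so $\sum_n n^{\rho-3/2}\ell(n)=\infty$ for $\rho>1/2$; and in case~3 both factors are of order $n^{-1/2}$, giving the divergent harmonic series. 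In each case $G=\infty$, confirming recurrence.
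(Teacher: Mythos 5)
Your proposal is correct and follows essentially the same route as the paper: reduce recurrence of $(W_n,Z_n)$ to recurrence of the subordinated walk at the (non-strict, hence a fortiori strict) ladder epochs, then handle case~1 by Wald's identity (the paper likewise notes $\tau$ is a stopping time for the joint walk $(S_n,Z_n)$, so no independence is needed there), case~2 by Proposition \ref{pro: for which rho is Z_tau recurrent/tranisent}, and case~3 by Proposition \ref{REmark}. The only cosmetic difference is in case~3, where you verify recurrence via Pitman's theorem and the Chung--Fuchs integral test while the paper simply cites \textsc{Spitzer} [Sec.~8, E2] for symmetric walks with $y^2\,\p(Z_\tau=y)\to C$ --- the same criterion in substance --- and your Green-function cross-check correctly flags that it requires independence, which the theorem does not assume in case~1.
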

\begin{proof}
Recall that the $k$-th return time of the first coordinate $(W_n)$ to $0$ is equal to the $k$-th non-strict ascending ladder epoch $\bar{\tau}(k)$ of the underlying random walk $(S_n)$. Thus, the return times to the origin of $(W_n,Z_n)$ are the same as for the induced random walk $(Z_{\bar{\tau}(n)})$. Moreover, the process $(Z_{\bar{\tau}(n)})$ is recurrent if the random walk $(Z_{\tau(n)})$ is recurrent, where $\tau (n)$ is the $n$-th strict ascending ladder epoch of $(S_n)$.

To prove the first assertion we notice that as $W_n$ is positive recurrent we known that $\e \tau <\infty$, where $\tau = \tau (1)$. We also notice that $\tau$ is a stopping time for the two-dimensional random walk $(S_n,Z_n)$ which implies that the event $\{\tau  \leq n\}$ is independent of $V_{n+1}$ and whence we are allowed to apply the Wald's identity in the form $\e Z_{\tau } =\e Z_1 \e \tau =0$.
Therefore $(Z_{\tau(n)})$ is recurrent. 

The second claim is a direct consequence of Proposition \ref{pro: for which rho is Z_tau recurrent/tranisent}.
In the last case we have $\e(Y)=0$ and, as follows by Proposition \ref{REmark}, the following asymptotic relation holds
\begin{align*}
y^2 \p(Z_{\tau }=y)&\rightarrow C>0, \quad \text{as } y\rightarrow \infty.
\end{align*}
Since $(Z_{\tau(n)})$ is a symmetric random walk, we conclude, for instance by \textsc{Spitzer} \cite[Sec. 8, E2]{sp76}, that it is recurrent.
\end{proof}

\subsection*{LP in higher dimensions}\hspace*{\fill} \\

We consider the multidimensional Lindley process $(W_n^1,\ldots, W_n^d)$ in $\mathbb{N}_0\times \dots \times \mathbb{N}_0$.
The underlying random walks $(S_n^i)$ are governed by distributions $\mu^i$ which are supported by $\mathbb{Z}$ and such that $\p(Y^i>0)>0$.


The following theorem treats positive recurrence of the multidimensional LP. Its proof uses elements from the theory of stochastic dynamical systems and thus we briefly present necessary definitions and facts, see \textsc{Peign\'{e} and Woess} \cite{pewo11I} and \cite{pewo11II} for more detailed description where in particular the substantial PhD work of \textsc{Benda} \cite{be98} is outlined.

Let $(X,d)$ be a proper metric space and denote by $\mathcal{C}=\mathcal{C}(X)$ the monoid of all continuous functions from $X$ to $X$ equipped with the topology of uniform convergence on compact sets. Fix a probability space $(\Omega ,\PP)$ and consider a sequence $(F_n)_{n\geq 1}$ of independent and identically distributed $\mathcal{C}$-valued random functions with a common distribution $\tilde{\mu}$.
The corresponding stochastic dynamical system $\omega \mapsto X_n^x(\omega)$ is given by
\begin{align*}
X_0^x = x,\quad X_n^x = F_n \circ \ldots \circ F_1(x), \quad x\in X,\ n\geq 1.
\end{align*}
For a LP on $\mathbb{N}_0\times \dots \times \mathbb{N}_0$ we have $F_n(x) = \max\{x-Y_n,0\}$ and these mappings are contractions so that we can restrict our attention to the set $\mathcal{L}\subset \mathcal{C}$ of all Lipschitz mappings with Lipschitz constants $\leq 1$. Notice that if $\mu$ is the distribution of $Y_n$ then $\tilde{\mu}$ is the image of $\mu$ under $y\mapsto f_y,\, f_y(x) = \max \{x-y,0\}$.
A stochastic dynamical system is called 
conservative if 
\begin{align*}
\PP \left( \liminf_{n\to \infty} d(X_n^x, x)<\infty \right) =1,\quad \textrm{for\ every}\ x\in X,
\end{align*}
and it is locally contractive if for every $x\in X$ and every compact set $K\subset X$,
\begin{align*}
\PP \left( d(X_n^x, X_n^y) \IndFun{K}(X_n^x)\rightarrow 0,\text{ for\ all } y\in X \right) =1.
\end{align*} 
For $\omega \in\Omega$ we consider the set $L^x(\omega)$ of all accumulation points of the sequence $(X_n^x(\omega))_{n\geq 0}$ in $X$. The following lemma allows us to show that there is only one essential class for the multidimensional LP, see \cite[Lemma (2.5)]{pewo11I}.

\begin{lemma}\label{Lema_2}
For a conservative and locally contractive stochastic dynamical system, there exists a set $L\subset X$ such that 
\begin{align*}
\PP \left( L^x(\cdot) = L,\, \mathrm{for\ all}\ x\in X \right) =1.
\end{align*}
\end{lemma}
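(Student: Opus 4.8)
The plan is to establish the three defining features of the limit set $L$ in turn: that $L^x(\omega)$ is nonempty, that it is almost surely independent of the starting point $x$, and finally that it is almost surely a deterministic set. Throughout I shall exploit that $X$ is proper, hence separable, so that I may fix a countable dense set $D\subset X$ together with the countable family $\mathcal{K}$ of closed balls with rational radii centred at points of $D$; every compact subset of $X$ is contained in some member of $\mathcal{K}$. I shall also use repeatedly that the maps $F_n$ lie in $\mathcal{L}$, so that $d(X_n^x,X_n^y)\leq d(x,y)$ for all $n$ and all $x,y\in X$. Nonemptiness of $L^x(\omega)$ is immediate from conservativity: since $\liminf_n d(X_n^x,x)<\infty$ almost surely, the orbit $(X_n^x)$ remains in a bounded, hence relatively compact, region for infinitely many $n$, so it admits an accumulation point.

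Next I would show that $L^x$ does not depend on $x$. Fix $x\in D$. By local contractivity, almost surely the convergence $d(X_n^x,X_n^y)\IndFun{K}(X_n^x)\to 0$ holds simultaneously for all $y\in X$ and all $K\in\mathcal{K}$. Granting this, let $z\in L^x(\omega)$ and pick a subsequence with $X_{n_k}^x\to z$; choosing $K\in\mathcal{K}$ to be a compact neighbourhood of $z$, one has $X_{n_k}^x\in K$ eventually, whence $d(X_{n_k}^x,X_{n_k}^y)\to 0$ and therefore $X_{n_k}^y\to z$. This gives $L^x\subseteq L^y$ for every $y$. Intersecting the underlying almost sure events over the countable set $x\in D$ yields, almost surely, $L^x=L^y$ for all $x,y\in D$; call this common set $L(\omega)$. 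For an arbitrary $y\in X$ the inclusion $L(\omega)\subseteq L^y$ is then known, and the reverse inclusion follows from the estimate $d(X_n^x,X_n^y)\leq d(x,y)$ by taking $x\in D$ arbitrarily close to $y$ and using that $L(\omega)$ is closed. Hence $L^x(\omega)=L(\omega)$ for all $x\in X$, almost surely.

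Finally I would prove that $L(\omega)$ is almost surely constant. Let $\theta$ denote the shift on the i.i.d.\ sequence $(F_n)$, so that $F_n\circ\theta=F_{n+1}$. The identity $X_{n+1}^x(\omega)=X_n^{F_1(\omega)(x)}(\theta\omega)$ shows that the orbit from $x$ under $\omega$ and the orbit from $F_1(\omega)(x)$ under $\theta\omega$ differ only by an index shift, so they share the same accumulation set; combined with the independence of the starting point proved above, this gives $L(\omega)=L(\theta\omega)$ almost surely. Viewing $\omega\mapsto L(\omega)$ as a random closed set in the separable space $X$, the events $\{L(\omega)\cap B\neq\emptyset\}$ for balls $B$ with rational centre and radius are $\theta$-invariant; since $(F_n)$ is i.i.d., the shift $\theta$ is ergodic and each such event has probability $0$ or $1$. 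As these countably many events determine the closed set $L(\omega)$, it follows that $L(\omega)=L$ for a fixed deterministic set $L$, almost surely, which is the assertion.

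The main obstacle I anticipate is the last step: making the passage from the almost sure invariance $L(\omega)=L(\theta\omega)$ to almost sure constancy fully rigorous requires the correct measurable framework for random closed sets, for instance the Fell topology, under which the space of closed subsets of a proper space is compact metrizable, so that the ergodic zero-one argument genuinely applies. The second step also demands care: local contractivity is assumed only pointwise in $x$, so upgrading it to a statement valid simultaneously for all $x\in X$ relies crucially on both the separability of $X$ and the nonexpansiveness $d(X_n^x,X_n^y)\leq d(x,y)$ of the system.
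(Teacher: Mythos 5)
Your proof is essentially sound, but the first thing to note is that the paper contains no proof of this lemma at all: it is quoted from Peign\'{e} and Woess \cite[Lemma (2.5)]{pewo11I} (going back to Benda's thesis), so your argument is a self-contained substitute for a cited result rather than a parallel to an internal proof. Your architecture is the standard one and each step works: nonemptiness of $L^x(\omega)$ from conservativity plus properness of $X$; independence of the starting point from local contractivity applied at each $x$ in a countable dense set $D$, intersected over a countable exhausting family of closed rational balls; determinism via the identity $X_{n+1}^x(\omega)=X_n^{F_1(\omega)(x)}(\theta\omega)$, which gives $L(\omega)=L(\theta\omega)$ a.s., followed by the zero--one law for the ergodic i.i.d.\ shift. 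The measurability worry you flag at the end is real but needs no Fell-topology machinery: for an open ball $B$ one has $\{L(\omega)\cap B\neq\emptyset\}=\bigcup_{B'}\{X_n^{x}\in B' \text{ i.o.}\}$, the union running over rational balls $B'$ with $\overline{B'}\subset B$ and $x\in D$ fixed (properness makes $\overline{B'}$ compact, so visiting $B'$ infinitely often forces an accumulation point in $\overline{B'}$), which is manifestly measurable; and almost-invariance of these events under $\theta$ suffices for the zero--one argument.

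The one place where your proof is genuinely narrower than the lemma as stated: in upgrading $L(\omega)\subseteq L^y$ to $L^y\subseteq L(\omega)$ for \emph{arbitrary} $y\in X$, you invoke the nonexpansiveness $d(X_n^x,X_n^y)\leq d(x,y)$. The lemma is formulated for a general conservative, locally contractive system of $\mathcal{C}$-valued maps; there, local contractivity puts the indicator on the orbit of $x$, so it gives no control of $d(X_{n_k}^x,X_{n_k}^y)$ at the times $n_k$ when the $y$-orbit accumulates, and approximating $y$ by points of $D$ fails without a uniform modulus. In the cited source this step is handled by a different device (local contractivity is shown to hold in a form uniform over compact sets of starting points). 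For the purposes of this paper the restriction is harmless, since the Lindley maps $f_y(x)=\max\{x-y,0\}$ lie in $\mathcal{L}$ and the authors explicitly restrict to $\mathcal{L}$; but as a proof of the lemma in the generality in which it is stated, that single step requires the stronger input, and you should either add the nonexpansiveness hypothesis to the statement or import the uniformization argument.
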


We now prove the main theorem of this section.

\begin{theorem}\label{the: positive recurrence of multidimensional lp}
Suppose that each of the Lindley processes $(W_n^i)$ is positive recurrent. Then there exists a unique invariant probability measure of the process $(W_n^1, \dotsc, W_n^d)$.
The stationary measure is the distribution of a random variable $U$ which is the limit of the backward process 
\begin{align}\label{Backward}
F_1\circ \dots \circ F_n (x)\xrightarrow{a.s} U,\quad \textrm{for\ every} \ x\in \mathbb{R}^d,
\end{align}
where $F_n(x) = \max\{x-Y_n,0\}$. In particular, 
the multidimensional process $(W_n^1, \dotsc, W_n^d)$ is positive recurrent in its unique essential class.
\end{theorem}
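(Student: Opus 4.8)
The plan is to realise the forward process $(W_n^1,\dots,W_n^d)$ started at $x$ as the stochastic dynamical system $X_n^x=F_n\circ\dots\circ F_1(x)$ generated by the maps $F_n(x)=\max\{x-Y_n,0\}$ (the maximum taken coordinatewise), and to verify that this system is both \emph{conservative} and \emph{locally contractive}. Once this is done, the uniqueness of the essential class $L$ follows from Lemma \ref{Lema_2}, while the almost sure convergence of the backward process \eqref{Backward}, the identification of its limit law as the unique invariant probability measure, and positive recurrence in $L$ all follow from the theory of \textsc{Benda} and of \textsc{Peign\'{e} and Woess} \cite{be98, pewo11I, pewo11II}. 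Throughout I would rely on two order relations: each $F_n$ is coordinatewise nondecreasing and Lipschitz with constant $1$ for the supremum metric, so $x\le y$ forces $X_n^x\le X_n^y$ for all $n$ and makes $n\mapsto d(X_n^x,X_n^y)$ nonincreasing. Moreover, positive recurrence of the $i$-th coordinate means that its underlying walk has positive drift, so $S_n^i\to+\infty$; since a one-dimensional Lindley process started at a level $v$ equals $v-S_n^i$ for as long as it has not yet reached $0$, it is absorbed at $0$ no later than the almost surely finite first passage time $\inf\{n:S_n^i\ge v\}$.

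Conservativity is the lighter requirement. Each coordinate $W_n^i$ converges in law to the almost surely finite variable $M_\infty^i=\sup_k S_k^i$, so the marginals, and hence the $\mathbb{R}^d$-valued sequence $(X_n^x)_n$, are tight. A tight sequence of nonnegative vectors satisfies $\liminf_n|X_n^x|<\infty$ almost surely, since otherwise $|X_n^x|\to\infty$ on a set of positive measure, which contradicts tightness; as $d(X_n^x,x)\le|X_n^x|+|x|$, this gives $\liminf_n d(X_n^x,x)<\infty$ a.s.

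Local contractivity is the core of the argument and the step I expect to be the main obstacle. Fixing $x$ and a second point $y$, I would work in each coordinate $i$ separately: the comparison processes started at $\min\{x^i,y^i\}$ and $\max\{x^i,y^i\}$ sandwich both $X_n^{x,i}$ and $X_n^{y,i}$, and at the almost surely finite instant when the dominating one is absorbed at $0$ the two coordinates coalesce and, by monotonicity, remain equal thereafter. The delicate point is to pass from a fixed $y$ to all $y$ at once: intersecting the almost sure coalescence events over all integer levels and all $d$ coordinates --- a countable family --- produces a single almost sure event on which, for every $y\in\mathbb{R}^d$, one has $d(X_n^x,X_n^y)=0$ for all large $n$, which is in fact stronger than the required $d(X_n^x,X_n^y)\,\IndFun{K}(X_n^x)\to 0$. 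Here the lattice support of the increments $Y^i$ together with the monotone sandwiching are exactly what make the uniform-in-$y$ statement available.

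With both properties established, Lemma \ref{Lema_2} supplies a deterministic set $L$ that is almost surely the set of accumulation points of every orbit, i.e. the unique essential class, which settles the final assertion of Example \ref{exa11}. The cited theory then yields the almost sure limit $U$ of $F_1\circ\dots\circ F_n(x)$, independent of $x$; splitting $F_1\circ\dots\circ F_n=F_1\circ(F_2\circ\dots\circ F_n)$ and letting $n\to\infty$ shows that $U$ has the same law as $F(U')$ for a fresh map $F$ and an independent copy $U'$ of $U$, so the law $\nu$ of $U$ is invariant, and since $U$ is finite $\nu$ is a genuine probability measure. Local contractivity forces any invariant probability to coincide with $\nu$, because a chain started from such a measure still converges in law to $U$ through the backward representation, which gives uniqueness; and a recurrent chain carrying a finite invariant measure on $L$ is positive recurrent there.
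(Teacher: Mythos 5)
Your proposal is correct and reaches the same endpoints as the paper, but the two key verifications are done by genuinely different means. For local contractivity the paper argues abstractly: since $\PP(Y^i>0)>0$, iterating a single map $f_y$ with $y>0$ shows that the null function lies in the closed sub-semigroup generated by $\mathrm{supp}(\tilde{\mu}^i)$, and then \cite[Corollary 4.4]{pewo11I} is invoked. Your monotone coupling proves it by hand: sandwiching the orbits from $x$ and $y$ between orbits from the coordinatewise min and max, observing that the dominating orbit in coordinate $i$ hits $0$ at the a.s.\ finite passage time $\inf\{n: S_n^i\geq \max\{x^i,y^i\}\}$ (positive drift), and intersecting over the countable family of integer starting levels to get the statement for all $y$ simultaneously. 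This is more elementary and in fact yields the stronger conclusion of exact coalescence, $d(X_n^x,X_n^y)=0$ eventually for all $y$ on a single a.s.\ event; note, though, that the lattice support of $Y^i$ is not what makes this work --- only monotonicity and the countable cofinal family of levels matter, so your closing remark on that point is a red herring. Your tightness argument for conservativity is also sound, whereas the paper obtains conservativity only after recurrence is established.

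The one step you must tighten is the appeal to ``the cited theory'' for the a.s.\ convergence of the backward process. The Leguesdron-type result the paper relies on (\cite{le89}, \cite[Prop.~2.6]{klwo16}) takes \emph{local contractivity together with an invariant probability measure} as hypotheses; you invoke it for the $d$-dimensional system before any invariant probability for that system has been exhibited, and only afterwards deduce invariance of the law of $U$ from the limit --- circular as written. Two clean repairs: (i) do as the paper does and apply the backward-convergence result \emph{coordinatewise}, where one-dimensional positive recurrence supplies the invariant probability $\nu^i$ of $(W_n^i)$, and then assemble \eqref{Backward} from the $d$ coordinate limits; or (ii) use your tightness plus the Feller property of the continuous maps $F_n$ to produce an invariant probability by a Krylov--Bogolyubov argument, after which the citation is legitimate. (For Lindley maps one can even verify backward convergence directly: coordinatewise,
\begin{align*}
F_1\circ\dots\circ F_n(x)\;=\;\max\Bigl\{\, x^i-S_n^i,\ \max_{0\leq k\leq n-1}\bigl(-S_k^i\bigr)\Bigr\}
\;\xrightarrow{a.s.}\;\max\Bigl\{0,\ \sup_{k\geq 1}\bigl(-S_k^i\bigr)\Bigr\},
\end{align*}
which is finite and independent of $x$ under positive drift.) Once this is fixed, your identification of the limit law as the unique invariant measure and the conclusion of positive recurrence on the unique essential class via Lemma \ref{Lema_2} coincide with the paper's use of Furstenberg's contraction principle and \cite[Lemma~(2.5)]{pewo11I}.
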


\begin{proof}
We start by showing that the LP $(W_n^i)$ is locally contractive  in each coordinate. 
We set $f_y(x)=\max\{x-y, 0\}$ and consider random contractions $F_n^i=f_{Y^i_n}$ with law $\tilde{\mu }^i$ which is the image of $\mu^i$ under the mapping $f_y$. Let $\mathfrak{S}^i$ be the closed sub-semigroup of $\mathcal{L}$ generated by $\mathrm{supp}(\tilde{\mu}^i)$.
Our aim is to show that there is a constant function in $\mathfrak{S}^i$ and this, in view of \cite[Corollary 4.4]{pewo11I}, will force local contractivity. The claim follows by the assumption $\PP (Y^i>0)>0$. Indeed, there is $y>0$ such that for any $x\in \mathbb{R}$ there is $N_x>1$ such that for all $n\geq N_x$ we obtain that the $n$-fold composition $f_y\circ \dots \circ f_y (x)\equiv 0$,
and thus the null-function lies in $\mathfrak{S}^i$ as desired.

Next, by positive recurrence, each $(W_n^i)$ has a unique invariant probability measure, say $\nu^i$. This together with local contractivity imply that for any starting point $x^i$ we have the a.s.-convergence of the backward process $F^i_1\circ\dots \circ F^i_n(x^i)\rightarrow U^i$, where $U^i$ is a random variable with distribution $\nu^i$. This goes back to \textsc{Leguesdron} \cite{le89}, compare with \cite[Prop. 2.6]{klwo16}. Since we have convergence of all coordinates, we get \eqref{Backward}.
Applying Furstenberg's contraction principle, see \cite[Prop. 1.3]{pewo11I}, we conclude that the distribution $\nu$ of the random vector $(U_1,U_2, \dotsc ,U_d)$ is the unique invariant probability measure for $(W_n^1,\dotsc,W_n^d)$ which is equivalent to positive recurrence.

It is left to show that there is only one essential class. Indeed, by the very definition, local contractivity of the coordinates implies that the multidimensional process $(W_n^1,\ldots ,W_n^d)$ is locally contractive as well. Since we have proved it is recurrent, it must be conservative. In our case, the Lindley process lives on the grid and thus the deterministic set $L\subset \mathbb{N}_0\times \dots \times \mathbb{N}_0$ from Lemma \ref{Lema_2} is such that, independently of the starting point,
\begin{align*}
\PP \left( (W_n^1,\ldots ,W_n^d) = l\ \mathrm{for\ infinitely\ many}\ n \right) =1,\quad \mathrm{for\ every}\ l\in L.
\end{align*}
We clearly conclude that $L$ is the unique essential class of $(W_n^1,\ldots ,W_n^d)$.
\end{proof}

\begin{remark}
To prove the existence of positive recurrent states of $(W_n^1,\ldots ,W_n^d)$ there is a simple argument which was presented to us by Nina Gantert and mentioned already in the context of RRW in \cite[Remark (4.10)]{klwo16}. However, this argument yields no understanding of the number of essential classes and their absorption properties.
The use of local contractivity leads to an answer and additional insight, namely a.s. convergence of the backward process.
\end{remark}

\subsection*{Acknowledgement} We are grateful to Prof.~Marc Peign\'{e} (Univ. Tours) and Prof.~Wolfgang Woess (TU Graz) for having brought our attention to this topic and for many stimulating discussions. We thank Dr.~Sebastian M\"{u}ller (Univ. Aix-Marseille) for fruitful communication.
We also thank the referee for helpful suggestions that improved readability of the paper.

\bibliographystyle{abbrv}
\bibliography{mybib_diss}   

\begin{thebibliography}{10}

\bibitem{al02}
G.~K. Alexopoulos.
\newblock Random walks on discrete groups of polynomial volume growth.
\newblock {\em Ann. Probab.}, 30(2):723--801, 2002.

\bibitem{as00}
S.~Asmussen.
\newblock {\em Applied {P}robability and {Q}ueues}.
\newblock Springer, 2000.

\bibitem{be98}
M.~Benda.
\newblock {\em Schwach kontraktive dynamische Systeme}.
\newblock PhD thesis, Ludwig-maximilians-Universit\"{a}t M\"{u}nchen, 1998.

\bibitem{cygan1}
A.~Bendikov and W.~Cygan.
\newblock Alpha-stable random walk has massive thorns.
\newblock {\em Colloq. Math.}, 138:105--129, 2015.

\bibitem{BCT}
A.~Bendikov, W.~Cygan, and B.~Trojan.
\newblock Limit theorems for random walks.
\newblock {\em Stochastic Process. Appl.}, 127:3268--3290, 2017.

\bibitem{bi87}
N.~Bingham, C.~Goldie, and J.~Teugels.
\newblock {\em Regular {V}ariation}.
\newblock Cambridge {U}niversity {P}ress, 1987.

\bibitem{Borovkov1976}
A.~A. Borovkov.
\newblock {\em Stochastic {P}rocesses in {Q}ueueing {T}heory}.
\newblock Springer-Verlag, New York-Berlin, 1976.

\bibitem{chfu51}
K.~L. Chung and W.~H.~J. Fuchs.
\newblock On the distribution of values of sums of random variables.
\newblock {\em Mem. Amer. Math. Soc.}, No. 6:12, 1951.

\bibitem{Diaconis}
P.~Diaconis and D.~Freedman.
\newblock Iterated random functions.
\newblock {\em SIAM Rev.}, 41(1):45--76, 1999.

\bibitem{do82}
R.~A. Doney.
\newblock On the exact asymptotic behaviour of the distribution of ladder
  epochs.
\newblock {\em Stochastic Process. Appl.}, 12(2):203--214, 1982.

\bibitem{do95}
R.~A. Doney.
\newblock Spitzer's condition and ladder variables in random walks.
\newblock {\em Probab. Theory Related Fields}, 101(4):577--580, 1995.

\bibitem{ep79}
M.~S. \`Eppel'.
\newblock A local limit theorem for the first passage time.
\newblock {\em Sib. Math. J.}, 20(1):181--191, 1979.

\bibitem{espera13}
R.~Essifi, M.~Peign\'e, and K.~Raschel.
\newblock Some aspects of fluctuations of random walks on {$\mathbb{R}$} and
  applications to random walks on {$\mathbb{R}^+$} with non-elastic reflection
  at $0$.
\newblock {\em ALEA Lat. Am. J. Probab. Math. Stat.}, 10(2):591--607, 2013.

\bibitem{fe71II}
W.~Feller.
\newblock {\em An {I}ntroduction to {P}robability {T}heory and its
  {A}pplications}, volume~II.
\newblock Wiley, New York, 1971.

\bibitem{gala62}
A.~Garsia and J.~Lamperti.
\newblock A discrete renewal theorem with infinite mean.
\newblock {\em Comment. Math. Helv.}, 37:221--234, 1962/1963.

\bibitem{gnko69}
B.~V. Gnedenko and A.~N. Kolmogorov.
\newblock {\em Limit {D}istributions for {S}ums of {R}andom {V}ariables}.
\newblock Addison-Wesley publishing company, 1968.

\bibitem{gu88}
A.~Gut.
\newblock {\em Stopped Random Walks}.
\newblock Springer, 1988.

\bibitem{Kendall}
D.~G. Kendall.
\newblock Some problems in the theory of queues.
\newblock {\em J. Roy. Statist. Soc. Ser. B.}, 13:151--173; discussion:
  173--185, 1951.

\bibitem{Kiefer_Wolfowitz}
J.~Kiefer and J.~Wolfowitz.
\newblock On the theory of queues with many servers.
\newblock {\em Trans. Amer. Math. Soc.}, 78:1--18, 1955.

\bibitem{klwo16}
J.~Kloas and W.~Woess.
\newblock Multidimensional random walk with reflections.
\newblock 2016.
\newblock preprint, arXiv:1704.06055v1.

\bibitem{lali10}
G.~F. Lawler and V.~Limic.
\newblock {\em Random {W}alk: {A} {M}odern {I}ntroduction}.
\newblock Cambridge Studies in Advanced Mathematics, 2010.

\bibitem{le89}
J.~P. Leguesdron.
\newblock Marche al\'eatoire sur le semi-groupe des contractions de
  $\mathbb{R}^d$. {C}as de la marche al\'eatoire sur {$\mathbb{R}_+$} avec choc
  \'elastique en z\'ero.
\newblock {\em Ann. Inst. H. Poincar\'e Probab. Statist.}, 25(4):483--502,
  1989.

\bibitem{Lindley}
D.~V. Lindley.
\newblock The theory of queues with a single server.
\newblock {\em Proc. Cambridge Philos Soc.}, 48:277--289, 1952.

\bibitem{pewo08}
M.~Peign\'e and W.~Woess.
\newblock On {R}ecurrence of {R}eflected {R}andom {W}alk on the {H}alf-line.
\newblock 2008.
\newblock unpublished manuscript, arXiv:0612306v1.

\bibitem{pewo11I}
M.~Peign\'e and W.~Woess.
\newblock Stochastic dynamical systems with weak contractivity properties {I}.
  {S}trong and local contractivity.
\newblock {\em Colloq. Math.}, 125(1):31--54, 2011.

\bibitem{pewo11II}
M.~Peign\'e and W.~Woess.
\newblock Stochastic dynamical systems with weak contractivity properties {II}.
  {I}teration of {L}ipschitz mappings.
\newblock {\em Colloq. Math.}, 125(1):55--81, 2011.

\bibitem{pi68}
E.~J.~G. Pitman.
\newblock On the behavior of the characteristic function of a probability
  distribution in the neighborhood of the origin.
\newblock {\em J. Austral. Math. Soc.}, 8:423--443, 1968.

\bibitem{sp76}
F.~Spitzer.
\newblock {\em Principles of {R}andom {W}alk}.
\newblock Springer, 1976.

\bibitem{tu75}
H.~G. Tucker.
\newblock On moments of distribution functions attracted to stable laws.
\newblock {\em Houston J. Math.}, 1(1):149--152, 1975.

\bibitem{uc11}
K.~Uchiyama.
\newblock A note on summability of ladder heights and the distributions of
  ladder epochs for random walks.
\newblock {\em Stochastic Process. Appl.}, 121(9):1938--1961, 2011.

\bibitem{vawa08}
V.~A. Vatutin and V.~Wachtel.
\newblock Local limit theorem for ladder epochs.
\newblock 2007.
\newblock unpublished manuscript, arXiv:0701914.

\bibitem{vawa2}
V.~A. Vatutin and V.~Wachtel.
\newblock Local probabilities for random walks conditioned to stay positive.
\newblock {\em Probab. Theory Relat. Fields}, (143):177--217, 2009.

\end{thebibliography}

\end{document}